\newcommand{\dedge}[1]{\ar@{--}[#1]}
\newcommand{\edge}[1]{\ar@{-}[#1]}
\newcommand{\lulab}[1]{\ar@{}[l]_<<{#1}}
\newcommand{\rulab}[1]{\ar@{}[r]^<<{#1}}
\newcommand{\ldlab}[1]{\ar@{}[l]^<<{#1}}
\newcommand{\rdlab}[1]{\ar@{}[r]_<<{#1}}
 \newcommand{\gh}{\mathscr{H}}
  \newcommand{\gr}{\mathscr{R}}
  \newcommand{\gl}{\mathscr{L}}
  \newcommand{\gp}{\mathscr{P}}
  \newcommand{\gG}{\mathscr{G}}
 \newcommand{\ZG}{{\mathbb{Z}G}}
 \newcommand{\ZM}{{\mathbb{Z}M}}
 \newcommand{\FP}{{\rm FP}}
 \newcommand{\FPinfty}{{\rm FP}\sb \infty}
 \newcommand{\bbe}{\mathbb{E}}
 \newcommand{\bbp}{\mathbb{P}}
 \newcommand{\bbq}{\mathbb{Q}}
 \newcommand{\bbr}{\mathbb{R}}
 \newcommand{\bbs}{\mathbb{S}}
\newtheorem{thm}{Theorem}
\newtheorem{cor}{Corollary}
\newtheorem{prop}{Proposition}
\newtheorem{lem}{Lemma}
\theoremstyle{definition}
\newcommand{\lb}{\langle}
\newcommand{\rb}{\rangle}
\begin{document}

\title[]{
On properties not inherited by monoids from \\ their Sch\"{u}tzenberger groups}
\keywords{Complete rewriting systems, Finitely presented groups and monoids, Finiteness conditions, Homotopy bases, Finite derivation type, Sch\"{u}tzenberger group}

\maketitle

\vspace{-6mm}

\begin{center}

    R. GRAY\footnote{
	  Part of this work was done while this author held an EPSRC Postdoctoral Fellowship at the University of St Andrews.}

    \medskip

    Centro de \'{A}lgebra da Universidade de Lisboa, \\ Av. Prof. Gama Pinto, 2,  1649-003 Lisboa,  \ Portugal.

    \medskip

    \texttt{rdgray@fc.ul.pt}

    \bigskip

    A. MALHEIRO\footnote{
	This work was developed within the projects POCTI-ISFL-1-143 and PTDC/MAT/69514/2006 of CAUL, financed by FCT and FEDER, and
	Supported by the Treaty of Windsor scheme of the British Council of Great Britain and Portugal. }

    \medskip

    Centro de \'{A}lgebra da Universidade de Lisboa, \\ Av. Prof. Gama Pinto, 2,  1649-003 Lisboa,  \ Portugal.
    \\ and \\ Departamento de Matem\'{a}tica, Faculdade de Ci\^{e}ncias e Tecnologia \\ da Universidade Nova de Lisboa,
    2829-516 Caparica, Portugal

    \medskip

    \texttt{malheiro@cii.fc.ul.pt}

    \bigskip

    S. J. PRIDE

    \medskip

    Department of Mathematics,\ University of Glasgow,  \\
    University Gardens, G12 8QW, Scotland.

    \medskip

    \texttt{stephen.pride@gla.ac.uk} \\
\end{center}

\begin{abstract}
We give an example of a monoid with finitely many left and right ideals, all of whose Sch\"{u}tzenberger groups are presentable by finite complete rewriting systems, and so each have finite derivation type, but such that the monoid itself does not have finite derivation type, and therefore does not admit a presentation by a finite complete rewriting system. The example also serves as a counterexample to several other natural questions regarding complete rewriting systems and finite derivation type. Specifically it allows us to construct two finitely generated monoids $M$ and $N$ with isometric Cayley graphs, where $N$ has finite derivation type (respectively, admits a presentation by a finite complete rewriting system) but $M$ does not. This contrasts with the case of finitely generated groups for which finite derivation type is known to be a quasi-isometry invariant. The same example is also used to show that neither of these two properties is preserved under finite Green index extensions.

\

\noindent \textit{2000 Mathematics Subject Classification:} 20M50 (primary),  20M05, 68Q42 (secondary)
\end{abstract}

\section{Introduction}
\label{sec_intro}

It is well known that, even if a monoid is given by a finite presentation, the word problem for the monoid may be undecidable. Markov and Post proved independently that the word problem for finitely presented monoids is undecidable in general. Later, Novikov and Boone extended the result of Markov and Post to finitely presented groups; see \cite{LyndonandSchupp} for references. Therefore, one is interested in classes of finite presentations which guarantee that the word problem is decidable. A class of this form that has received a lot of attention in the literature is the class of presentations that are finite and complete (also called convergent). A finite complete rewriting system is a finite presentation for a monoid of a particular form (both confluent and noetherian) which in particular gives a solution of the word problem for the monoid.
(See Section~\ref{sec_prelims} for full definitions of the concepts mentioned here.)
It is natural to seek an algebraic characterization of the class of finitely presented monoids that admit a presentation through a finite complete
string rewriting system.   As part of this investigation, in \cite{Squier1} Squier introduced a homotopical finiteness property of monoids called finite derivation type. Given a rewriting system (i.e. monoid presentation) $\lb A | R \rb$ one builds a (combinatorial) $2$-complex $\mathcal{D}$, called the \emph{Squier complex}, whose $1$-skeleton has vertex set $A^*$ and edges corresponding to applications of relations from $R$, and that has $2$-cells adjoined for each instance of `non-overlapping' applications of relations from $R$. There is a natural action of the free monoid $A^*$ on the Squier complex $\mathcal{D}$. A  collection of closed paths in $\mathcal{D}$ is called a \emph{homotopy base} if the complex obtained by adjoining cells for each of these paths, and those that they generate under the action of the free monoid on the Squier complex, has trivial fundamental groups. A monoid defined by a presentation is said to have \emph{finite derivation type} (FDT for short) if the corresponding Squier complex admits a finite homotopy base. Squier \cite{Squier1} proved that the property FDT is independent of the choice of finite presentation, so we may speak of FDT monoids. The original motivation for studying this notion is Squier's result \cite{Squier1} which says that if a monoid admits a presentation by a finite complete rewriting system then the monoid must have FDT. Further motivation for the study of these concepts comes from the fact that the fundamental groups of connected components of Squier complexes, which are called \emph{diagram groups}, have turned out to be a very interesting class of groups, and have been extensively studied in
\cite{Guba2006, Farley2003, Farley2005, Guba, Guba1999, GubaSapir2006}.
Various other geometric finiteness properties have been introduced and investigated in the study of complete rewriting systems; see for instance 
\cite{Alonso2003, KobayashiOtto2003}.
String rewriting systems continue to receive a lot of attention in the literature; see \cite{Brown2006, Chouraqui2009, Evans2007, Hermiller1999}.
More background on the connections between string rewriting systems and homological and homotopical finiteness properties of monoids may be found in the survey articles \cite{CohenSurvey, OttoKobayashiSurvey}.

It is natural to seek connections between the properties of a monoid and the properties of the subgroups of that monoid, and numerous results of this kind exist in the literature. For instance, in \cite{Ruskuc1999} it was shown that a (von Neumann) regular monoid $S$ with finitely many left and right ideals is finitely presented if and only if all of its maximal subgroups are finitely presented. Analogous results are known to hold for numerous other finiteness properties, and this result remains valid if finite presentability is replaced by various other standard
finiteness conditions. In particular we have (under the same assumptions on $S$) that  $S$ is residually finite (respectively,
locally finite, periodic, finitely generated, with solvable word problem) if and only if all the
maximal subgroups of $S$ are residually finite (respectively, locally finite, periodic, finitely generated,
with solvable word problem); see \cite{Golubov1975, Ruskuc1999}.
More recently in \cite[Theorem~10.12]{Dales2010} it was shown how the amenability of the Banach algebra associated with a semigroup relates to the amenability of the maximal subgroups of the semigroup.
It follows from a result in \cite{Duncan1990} that if the Banach algebra associated with the semigroup is amenable then the semigroup must be regular with finitely many left and right ideals. Regular semigroups with finitely many left and right ideals also arise naturally in the study of 
free regular idempotent generated semigroups of finite biordered sets; see for instance \cite{Brittenham2009, Nambooripad1979, Gray2011}.

It was pointed out in \cite[Remark and Open Problem 4.5]{Ruskuc1999} that 
the situation was less clear for various finiteness conditions related to homology and rewriting systems, and specifically it was asked whether corresponding results to those mentioned in the previous paragraph hold for either the property of being presentable by a finite complete rewriting system, or the related homotopical finiteness property $\mathrm{FDT}$.
In recent work by the first two authors of the present article, in \cite[Theorem~2]{GrayMalheiro} it was proved that a regular monoid $S$ with finitely many left and right ideals has $\mathrm{FDT}$ if and only if all its maximal subgroups have $\mathrm{FDT}$, while in \cite[Theorem~1]{GrayMalheiro2} it is shown that a regular monoid with finitely many left and right ideals is presented by a finite complete rewriting system provided all of its maximal subgroups admit presentations by finite complete rewriting systems.

Given these results, one natural thing to do is to try and extend them from regular monoids to arbitrary (non-regular) monoids. 
Recall that the maximal subgroups of a monoid $S$ are precisely the $\gh$-classes (in the sense of \cite{Green1}) of $S$ that contain idempotents. Sch\"{u}tzenberger
\cite{Schutzenberger1957, Schutzenberger1958} showed how one can assign to an arbitrary $\gh$-class $H$ a group
$\mathcal{G}(H)$, called the Sch\"{u}tzenberger group of $H$. Sch\"{u}tzenberger groups have
many features in common with maximal subgroups. In particular, if the $\gh$-class
$H$ contains an idempotent (and hence is a maximal subgroup) then $H$
and $\mathcal{G}(H)$ are isomorphic. Generalising the above-mentioned result for regular semigroups to arbitrary semigroups,
the main result of
\cite{Ruskuc2000} asserts that a monoid with finitely many left and right ideals is finitely presented if and only if all its Sch\"{u}tzenberger groups are finitely presented. As in the regular case, results like this are not particular to finite presentability, and the same result holds with finite presentability replaced by a long list of standard finiteness properties including being: residually finite, locally finite, periodic, finitely generated, or having solvable word problem; see \cite{Golubov1975, Gray&Ruskuc4, Ruskuc2000}. From this evidence, it would not be unreasonable to suppose that the results about finite complete rewriting systems, and $\mathrm{FDT}$, for regular monoids mentioned in the previous paragraph (obtained in \cite{GrayMalheiro2, GrayMalheiro}) should, as for all the other properties mentioned above, extend to non-regular semigroups via the concept of Sch\"{u}tzenberger group. The aim of this article is to show that, in fact, contrary to expectation, this is not the case. We do this by giving an example of a monoid with finitely many left and right ideals, all of whose Sch\"{u}tzenberger groups are given by finite complete rewriting systems, and therefore all have $\mathrm{FDT}$, but such that the monoid itself does not have $\mathrm{FDT}$, and therefore does not admit a presentation by a finite complete rewriting system. This is the main result of this article.

\begin{thm}
\label{thm_themainresult}
Let $M$ be the monoid defined by the presentation
\begin{align*}
\lb \; a, a^{-1}, b, b^{-1}, h
& \; | \; aa^{-1} = a^{-1}a=bb^{-1}=b^{-1}b=1, \\
& \phantom{\; | \;}
xh = hx, \; hxy = hyx \quad (x,y \in \{ a, a^{-1}, b, b^{-1} \}),  \\
& \phantom{\; | \;}
h^2 a = h^2 a^{-1} = h^2 b = h^2 b^{-1} = h^3 = h^2 \; \rb.
\end{align*}
\begin{enumerate}[{\normalfont (i)}]
\item The monoid $M$ has exactly three $\gh$-classes, with Sch\"{u}tzenberger groups isomorphic to the trivial group, the free abelian group of rank two, and the free group of rank two, respectively. In particular, all three Sch\"{u}tzenberger groups of $M$ are presentable by finite complete rewriting systems and they all have finite derivation type.
\item The monoid $M$ does not have finite derivation type, and therefore is not presentable by a finite complete rewriting system.
\end{enumerate}
\end{thm}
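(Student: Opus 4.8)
\emph{Proof proposal.} For part (i) the plan is to pin down the structure of $M$ via normal forms. The submonoid generated by $a,a^{-1},b,b^{-1}$ is the free group $F_2$, presented by the finite complete rewriting system with rules $xx^{-1}\to 1$, $x^{-1}x\to 1$ ($x\in\{a,b\}$) whose normal forms are the reduced words; the relations $xh=hx$ say that $h$ is central over $F_2$, and together with $hxy=hyx$ they force $hw=h\bar w$ for every word $w$ over $\{a^{\pm1},b^{\pm1}\}$, where $\bar w$ denotes the image of $w$ in the abelianization $\mathbb{Z}^2$; finally $h^2x=h^2=h^3$ makes $h^2$ a two-sided zero. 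One then checks, by a rewriting/normal-form argument, that every element of $M$ is represented by exactly one word of one of the shapes: a reduced word over $\{a^{\pm1},b^{\pm1}\}$; $h\cdot a^ib^j$ with $(i,j)\in\mathbb{Z}^2$; or $h^2$. Reading off Green's relations from these normal forms: the group of units is $F_2$, a single $\gh$-class equal to its $\gj$-class; the set $\{h\cdot a^ib^j\}$ is a single $\gh$-class, containing no idempotent (the product of any two of its elements equals $h^2$); and $\{h^2\}$ is the zero. Hence $M$ has three $\gh$-classes, and a direct computation of the (left, or right) Schützenberger group of each — using that $F_2$ acts on the middle $\gh$-class by translation through $F_2\twoheadrightarrow\mathbb{Z}^2$ — yields the trivial group, $\mathbb{Z}^2$, and $F_2$. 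Each of these is presentable by a finite complete rewriting system ($F_2$ by the system above, $\mathbb{Z}^2$ by the obvious confluent length-sorting system, the trivial group vacuously), so each has FDT by Squier's theorem.

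For part (ii) the plan is to argue by contradiction: suppose $M$ has FDT. Since FDT is independent of the finite presentation chosen, I would work with the Squier complex $\gd$ of the given presentation (or of a more convenient finite one derived from it). The crux is the interaction between $h$ and the \emph{non-abelian} group $F_2$: the relations $xh=hx$ and $hxy=hyx$ have as consequences the identities $h\cdot c=h$ for every $c$ in the commutator subgroup $[F_2,F_2]$, and $[F_2,F_2]$ is a free group of infinite rank; equivalently, the stabilizer of the middle $\gh$-class is all of $F_2$, acting through $F_2\twoheadrightarrow\mathbb{Z}^2$ with infinitely generated kernel. I would make this quantitative by exhibiting an explicit infinite family of closed paths $\{P_n\}_{n\ge1}$ in the connected component of $h$ in $\gd$ — each $P_n$ comparing two derivations of an identity such as $h\,a^nb=h\,b\,a^n$, whose lengths grow with $n$ — and then constructing a functional on closed paths that is invariant under the homotopy relation generated by any \emph{finite} collection of closed paths together with all of its translates under the $A^*$-action, yet separates infinitely many of the $P_n$. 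This precludes a finite homotopy base, contradicting FDT; and then by Squier's theorem $M$ is not presentable by a finite complete rewriting system either. (Alternatively one could package this as a transfer lemma: for a monoid with finitely many left and right ideals, FDT should force not only FDT of each Schützenberger group but also finite generation of the kernels of the stabilizer actions on the $\gh$-classes, which fails here because $[F_2,F_2]$ is not finitely generated.)

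The main obstacle is precisely this last point — actually proving that $M$ fails FDT — and the reason it is delicate is that there is no homological shortcut. All the Schützenberger groups of $M$ have excellent homological finiteness, so the standard homological necessary conditions for FDT do not obstruct: in particular, the Cremanns--Otto implication ``FDT $\Rightarrow \mathrm{FP}_3$'' is vacuously satisfied (and one expects, via the known homological transfer results, that $M$ is even of type $\mathrm{FP}_\infty$). Thus the obstruction to FDT is genuinely homotopical and must be extracted from the Squier complex itself (or a fine homotopical invariant such as the homotopy/crossed module of the presentation), not from homology. Engineering the invariant so that it is simultaneously well defined on $\gd$, unchanged by the closure of any finite cell set under the free-monoid action, and nonzero on $P_n$ for infinitely many $n$ — thereby witnessing the infinite rank of $[F_2,F_2]$ homotopically while the Schützenberger group of that class has collapsed to $\mathbb{Z}^2$ — is the heart of the argument.
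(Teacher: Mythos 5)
Your part (i) is essentially the paper's argument: normal forms $F(a,b)\cup\{hb^ja^k\}\cup\{h^2\}$ obtained from a confluent rewriting of the presentation, three $\gh$-classes read off from these, and Sch\"utzenberger groups the trivial group, $\mathbb{Z}^2$ and $F_2$, each given by a finite complete rewriting system and hence of finite derivation type. That much is correct and complete in outline.

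For part (ii), however, you have identified the right source of the obstruction (the kernel of $F_2\twoheadrightarrow\mathbb{Z}^2$, i.e.\ the infinitely generated commutator subgroup) but the step you yourself call the heart of the argument is not carried out, and as formulated it cannot work verbatim. You propose a functional on closed paths that is \emph{invariant} under the homotopy relation generated by an arbitrary finite set of circuits and its $A^*$-translates, yet nonzero on infinitely many explicit circuits $P_n$. But if a finite set $C_0$ were a homotopy base then $\sim_{C_0}$ would coincide with $\parallel$, so every closed path would be $\sim_{C_0}$-related to the empty path at its basepoint, and any strictly invariant functional (normalised to vanish on empty paths) would vanish on all closed paths; conversely, the natural additive functionals are not invariant under the two-sided $A^*$-action but only \emph{equivariant}. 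This is precisely why the paper does not argue with an invariant separating paths: it first completes $Q$ to an explicit infinite complete system $\overline{Q}$ and extracts from its critical circuits an infinite homotopy base $\mathcal{C}\cup\mathcal{Z}$ for $\Gamma$; it then defines $\Phi\colon P(\Gamma)\to\mathbb{Z}M$ by counting $K_a$- and $K_{a^{-1}}$-edges with signs, weighted by the image of the right-hand context, so that $\Phi$ is $\sim_0$-invariant and satisfies $\Phi(\alpha\cdot\bbp\cdot\beta)=\Phi(\bbp)\overline{\beta}$. The hypothesis FDT is then converted (Lemma~\ref{lem_passtoZG}, which also needs the observation that circuits in the non-zero components only involve $\mathcal{C}$-circuits with right multipliers in $A^*$) into finite generation of the right $\mathbb{Z}G$-submodule $\lb\Phi(\mathcal{C})\rb_{\ZG}$; this module is computed (Lemma~\ref{lem_equivalent}) to be generated by $1-a$ and the elements $1-w[a^{\epsilon},b^{\delta}]w^{-1}$; an elementary lemma on elements of the form $1-g$ (Lemma~\ref{lem_wellknown}) then converts module finite generation into finite generation of the subgroup $HN\le F_2$ with $H=\lb a\rb$ and $N=[F_2,F_2]$; and finally Magnus--Karrass--Solitar gives the contradiction, since $HN$ has infinite index and contains the nontrivial normal subgroup $N$. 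Note in particular that what FDT is shown to force is finite generation of $HN$, not of the kernel $N$ itself, so your alternative ``transfer lemma'' (FDT implies finitely generated stabilizer kernels) is not what is proved and would itself need an argument. Without the construction of such an equivariant path functional, its evaluation on an explicit homotopy base of critical circuits, and the module-to-subgroup transfer, your proposal for (ii) is a plausible strategy statement rather than a proof.
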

Part (i) of this theorem is very straightforward to verify, see Section~\ref{sec_up} below. Far less obvious is part (ii), and most of Section~\ref{sec_up} will be devoted to its proof. 
Let us make a few further remarks about this result.  

\

\noindent $\bullet$ Although the monoid $M$ does not have $\mathrm{FDT}$, it does have numerous other desirable properties. In particular we shall see that $M$ is of type left- and right-$\mathrm{FP}_\infty$, and $M$ has a linear time solvable word problem. 

\

\noindent $\bullet$  Using this example, exploiting the way the example highlights the different way that the properties behave for non-regular monoids when compared to regular monoids, we shall show (in Section~\ref{sec_consequences}) that neither the property of being presented by a finite complete rewriting system, nor $\mathrm{FDT}$, are isometry invariants of monoids. 
That is, we give examples of two finitely presented monoids $M$ and $N$, and generating sets, such that the resulting pair of Cayley graphs are isometric as directed spaces, but where $N$ has $\mathrm{FDT}$ (and is defined by a finite complete rewriting system) while $M$ does not have $\mathrm{FDT}$ (and is therefore not definable by a finite complete rewriting system). This contrasts with the case of groups for which $\mathrm{FDT}$ (which is equivalent to $\FP_3$ for groups \cite{Cremanns1996}) is known by \cite{Alonso1994} to be a quasi-isometry invariant.

\

\noindent $\bullet$  We shall also use Theorem~\ref{thm_themainresult} to show that neither $\mathrm{FDT}$, nor the property of being definable by a finite complete rewriting system, is inherited by finite Green index extensions, in the sense of \cite{Cain2009,Gray2008}. This is a little surprising, since both of these finiteness properties are known to be preserved when taking finite index extensions of groups
(see \cite[Proposition 5.1]{Brown} and \cite{Groves1993}), and when taking finite extensions of semigroups (see \cite{Wang1998}).

\

\noindent $\bullet$  For $\mathrm{FDT}$, in the other direction, passing from the monoid to its Sch\"{u}tzenberger groups, the expected result does hold: as a corollary of the main result of \cite{GrayMalheiroPride1} we have that if $S$ is a monoid with finitely many left and right ideals, and $S$ has $\mathrm{FDT}$, then all Sch\"{u}tzenberger groups of $S$ have $\mathrm{FDT}$.

\

\noindent In addition to this introduction, this article comprises four sections. 
In Section~2 we recall some basic definitions and results about string rewriting systems, and 
finite derivation type, and give the necessary notions from the structure theory of semigroups that we shall need. 
The proof of our main result, Theorem~\ref{thm_themainresult}, is given in Section~3. 
Finally, in Section~4 we discuss some consequences of our main result.

\section{Preliminaries}
\label{sec_prelims}

\subsection*{Derivation graphs, homotopy bases, and finite derivation type}

Let $A$ be a finite alphabet and let $R$ be a (possibly infinite) rewriting system over $A$. That is, $R \subseteq A^* \times A^*$ where $A^*$ is the free monoid over $A$. We assume, without loss of generality, that $R$ is anti-symmetric (that is, that $(u,v) \in R$ implies $(v,u) \not\in R$). An element of $R$ is called a \emph{rule}, and we often write $r_{+1}=r_{-1}$ for $(r_{+1}, r_{-1}) \in R$. For $u, v \in A^*$ we write $u \rightarrow_R v$ if $u \equiv w_1 r_{+1} w_2$, and $v \equiv w_1 r_{-1} w_2$ where $(r_{+1},r_{-1}) \in R$ and $w_1, w_2 \in A^*$. 
Here we write $u \equiv w$, for words $u,w \in A^*$, to mean that $u$ and $w$ are equal as words in $A^*$. 
The reflexive symmetric transitive closure $\leftrightarrow_R^{*}$ of $\rightarrow_R$ is precisely the congruence on $A^*$ generated by $R$. The ordered pair $\langle A | R \rangle$ is called a \emph{monoid presentation} with \emph{generators} $A$ and set of \emph{defining relations} $R$. If $S$ is a monoid that is isomorphic to $A^*  / \leftrightarrow_R^{*}$ we say that $S$ is the \emph{monoid defined by the presentation $\lb A | R \rb$}.
We say that two rewriting systems over the same alphabet are (Tietze) \emph{equivalent} if they define the same monoid. 
We write $|w|$ to denote the total number of letters in a word $w \in A^*$, which we call the \emph{length} of the word $w$.

With any monoid presentation $\gp = \lb A | R \rb$ we associate a graph (in the sense of Serre \cite{SerreTrees}) as follows. The \emph{derivation graph} of $\gp = \lb A | R \rb$ is an infinite graph $\Gamma = \Gamma(\gp) = (V,E,\iota, \tau, ^{-1})$ with \emph{vertex set} $V = A^*$, and \emph{edge set $E$} consisting of the collection of $4$-tuples
\[
\{ (w_1, r, \epsilon, w_2): \ w_1, w_2 \in A^*, r \in R, \ \mbox{and} \ \epsilon \in \{ +1, -1 \} \}.
\]
The functions $\iota, \tau : E \rightarrow V$ associate with each edge $\bbe = (w_1, r, \epsilon, w_2)$ (with $r=(r_{+1},r_{-1}) \in R$) its initial and terminal vertices $\iota \bbe = w_1 r_{\epsilon} w_2$ and $\tau \bbe = w_1 r_{- \epsilon} w_2$, respectively. The mapping $^{-1} : E \rightarrow E$ associates with each edge $\bbe = (w_1, r, \epsilon, w_2)$ an inverse edge $\bbe^{-1} = (w_1, r, -\epsilon, w_2)$.

A path is a sequence of edges $\bbp = \bbe_1 \circ \bbe_2 \circ \ldots \circ \bbe_n$ where $\tau \bbe_i  \equiv \iota \bbe_{i+1}$ for $i=1, \ldots, {n-1}$. Here $\bbp$ is a path from $\iota \bbe_1$ to $\tau \bbe_n$ and we extend the mappings $\iota$ and $\tau$ to paths by defining $\iota \bbp \equiv \iota \bbe_1$ and $\tau \bbp \equiv \tau \bbe_n$.  The inverse of a path $\bbp = \bbe_1 \circ \bbe_2 \circ \ldots \circ \bbe_n$ is the path $\bbp^{-1} = \bbe_n^{-1} \circ \bbe_{n-1}^{-1} \circ \ldots \circ \bbe_1^{-1}$, which is a path from $\tau \bbp$ to $\iota \bbp$. A \emph{closed path} is a path $\bbp$ satisfying $\iota \bbp \equiv \tau \bbp$. For two paths $\bbp$ and $\bbq$ with $\tau \bbp \equiv \iota \bbq$ the composition $\bbp \circ \bbq$ is defined.

We denote the set of paths in $\Gamma$ by $P(\Gamma)$, where for each vertex $w \in V$ we include a path $1_w$ with no edges, called the \emph{empty path} at $w$. We call a path $\bbp$ \emph{positive} if it is either empty or it contains only edges of the form $(w_1, r, +1, w_2)$. We use $P_+(\Gamma)$ to denote the set of all positive paths in $\Gamma$. Dually we have the notion of $\emph{negative}$ path, and $P_-(\Gamma)$ denotes the set of all negative paths. The free monoid $A^*$ acts on both sides of the set of edges $E$ of $\Gamma$ by
\[
x \cdot \bbe \cdot y = (x w_1, r, \epsilon, w_2 y)
\]
where $\bbe = (w_1, r, \epsilon, w_2)$ and $x, y \in A^*$. This extends naturally to a two-sided action of $A^*$ on $P(\Gamma)$ where for a path $\bbp = \bbe_1 \circ \bbe_2 \circ \ldots \circ \bbe_n$ we define
\[
x \cdot \bbp \cdot y = (x \cdot \bbe_1 \cdot y) \circ (x \cdot \bbe_2 \cdot y)
\circ \ldots \circ (x \cdot \bbe_n \cdot y).
\]
If $\bbp$ and $\bbq$ are paths such that $\iota \bbp \equiv \iota \bbq$ and $\tau \bbp \equiv \tau \bbq$ then we say that $\bbp$ and $\bbq$ are \emph{parallel}, and write $\bbp \parallel \bbq$. We use $\parallel\; \subseteq P(\Gamma) \times P(\Gamma)$ to denote the set of all pairs of parallel paths.

An equivalence relation $\sim$ on $P(\Gamma)$ is called a \emph{homotopy relation} if it is contained in $\parallel$ and satisfies the following four conditions.

\begin{enumerate}[(H1)]

\item If $\bbe_1$ and $\bbe_2$ are edges of $\Gamma$, then
\[
(\bbe_1 \cdot \iota \bbe_2) \circ (\tau \bbe_1 \cdot \bbe_2) \sim
(\iota \bbe_1 \cdot \bbe_2) \circ (\bbe_1 \cdot \tau \bbe_2 ).
\]

\item For any $\bbp, \bbq \in P(\Gamma)$ and $x,y \in A^*$
\[
\bbp \sim \bbq \ \ \mbox{implies} \ \  x \cdot \bbp \cdot y \sim x \cdot \bbq \cdot y.
\]

\item For any $\bbp, \bbq, \bbr, \bbs \in P(\Gamma)$ with $\tau \bbr \equiv \iota \bbp \equiv \iota \bbq$ and $\iota \bbs \equiv \tau \bbp \equiv \tau \bbq$
\[
\bbp \sim \bbq \ \ \mbox{implies} \ \ \bbr \circ \bbp \circ \bbs \sim \bbr \circ \bbq \circ \bbs.
\]

\item If $\bbp \in P(\Gamma)$ then $\bbp \bbp^{-1} \sim 1_{\iota \bbp}$, where $1_{\iota \bbp}$ denotes the empty path at the vertex $\iota \bbp$.

\end{enumerate}

For a subset $C$ of $\parallel$, the homotopy relation \emph{$\sim_C$ generated by $C$} is the smallest (with respect to inclusion) homotopy relation containing $C$. The relation $\sim_0 = \sim_\varnothing$ generated by the empty set $\varnothing$ is the smallest homotopy relation. If $\sim_C$ coincides with $\parallel$, then $C$ is called a \emph{homotopy base} for $\Gamma$. The presentation $\lb A | R \rb$ is said to have \emph{finite derivation type} ($\mathrm{FDT}$) if the derivation graph $\Gamma$ of $\lb A | R \rb$ admits a finite homotopy base.
A finitely presented monoid $S$ is said to have \emph{finite derivation type} ($\mathrm{FDT}$) if some (and hence any by \cite[Theorem~4.3]{Squier1})  finite presentation for $S$ has finite derivation type.

It is not difficult to see that a subset $C$  of $\parallel$ is a homotopy base of $\Gamma$ if and only if the set \[\{(\bbp \circ \bbq^{-1}, 1_{\iota \bbp}): (\bbp,\bbq) \in C \}\] is a homotopy base for $\Gamma$. Thus we say that a set $D$ of circuits is a homotopy base if the corresponding set $\{ (\bbp, 1_{\iota \bbp}) : \bbp \in D \}$ is a homotopy base. The following easy lemma will prove useful.

\begin{lem}[\cite{Kobayashi2000}, Lemma~2.1]
\label{lem_homgen}
A set $C$ of circuits in $\Gamma = \Gamma(\gp)$ is a homotopy base if and only if for any circuit $\bbp$ in $\Gamma$, there are $v_i, w_i \in A^*$, $\bbp_i \in P(\Gamma)$ and $\bbq_i \in C \cup C^{-1}$, $i=1,\ldots,n$, $n \geq 0$, such that
\[
\bbp
\sim_0
\bbp_1^{-1} \circ (v_1 \cdot \bbq_1 \cdot w_1) \circ \bbp_1 \circ \cdots \circ \bbp_n^{-1} \circ (v_n \cdot \bbq_n \cdot w_n) \circ \bbp_n.
\]
\end{lem}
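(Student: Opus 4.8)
The plan is to identify the homotopy relation generated by the circuit set $C$ explicitly, and then simply read off both implications. Write $\wh{C} = \{(\bbp, 1_{\iota\bbp}) : \bbp \in C\}$, so that ``$C$ is a homotopy base'' means precisely $\sim_{\wh{C}}\, =\, \parallel$. Define a relation $\approx$ on $P(\Gamma)$ by declaring $\bbp \approx \bbq$ exactly when $\bbp \parallel \bbq$ and
\[
\bbp \circ \bbq^{-1} \sim_0 \bbp_1^{-1} \circ (v_1 \cdot \bbq_1 \cdot w_1) \circ \bbp_1 \circ \cdots \circ \bbp_n^{-1} \circ (v_n \cdot \bbq_n \cdot w_n) \circ \bbp_n
\]
for some $v_i, w_i \in A^*$, $\bbp_i \in P(\Gamma)$, $\bbq_i \in C \cup C^{-1}$ and $n \geq 0$ (the case $n = 0$ meaning $\bbp \circ \bbq^{-1} \sim_0 1_{\iota\bbp}$). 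I would prove (a) that $\approx$ is a homotopy relation, and (b) that $\approx\, =\, \sim_{\wh{C}}$. Granting (a) and (b), the lemma is immediate: $C$ is a homotopy base iff $\sim_{\wh{C}}\, =\, \parallel$ iff $\approx\, =\, \parallel$, and $\approx\, =\, \parallel$ is in turn equivalent to the stated condition. Indeed, if $\approx\, =\, \parallel$ then every circuit $\bbp$ is parallel to $1_{\iota\bbp}$, so $\bbp \approx 1_{\iota\bbp}$, which (since $\bbp \circ 1_{\iota\bbp}^{-1} \equiv \bbp$ for a circuit $\bbp$) is exactly the displayed condition; conversely, if that condition holds for every circuit, then for any parallel pair $\bbp \parallel \bbq$ the circuit $\bbp \circ \bbq^{-1}$ yields $\bbp \circ \bbq^{-1} \approx 1_{\iota\bbp}$, and composing on the right with $\bbq$ and using (H3) for $\approx$ (valid by (a)) gives $\bbp \approx \bbq$; hence $\approx\, =\, \parallel$.

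To prove (a), I would check that $\approx$ is an equivalence relation contained in $\parallel$ satisfying (H1)--(H4). Reflexivity is the case $n = 0$; symmetry follows on reversing the product of conjugates and replacing each $\bbq_i$ by $\bbq_i^{-1} \in C \cup C^{-1}$; transitivity follows on concatenating two such products, splicing with $\bbq \circ \bbq^{-1} \sim_0 1$. Conditions (H1) and (H4) already hold for $\sim_0$, hence for $\approx$. For (H2), the two-sided action of $A^*$ sends a conjugate factor $\bbp_i^{-1} \circ (v_i \cdot \bbq_i \cdot w_i) \circ \bbp_i$ to $(x \cdot \bbp_i \cdot y)^{-1} \circ ((x v_i) \cdot \bbq_i \cdot (w_i y)) \circ (x \cdot \bbp_i \cdot y)$, again of the required form, and $\sim_0$ is preserved by the action. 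For (H3), from $\bbp \approx \bbq$ one gets $(\bbr \circ \bbp \circ \bbs) \circ (\bbr \circ \bbq \circ \bbs)^{-1} \sim_0 \bbr \circ (\bbp \circ \bbq^{-1}) \circ \bbr^{-1}$ by cancelling $\bbs \circ \bbs^{-1}$, and then conjugating the product of conjugates for $\bbp \circ \bbq^{-1}$ by $\bbr$ --- inserting $\bbr^{-1} \circ \bbr \sim_0 1$ between consecutive factors and absorbing $\bbr^{-1}$ to turn each $\bbp_i$ into $\bbp_i \circ \bbr^{-1}$ --- puts it in the required shape. Throughout, one uses that each $v_i \cdot \bbq_i \cdot w_i$ is a closed path and that within such a product every factor is a loop at the common base vertex, in order to check that the stated compositions are defined.

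For (b), the inclusion $\sim_{\wh{C}}\, \subseteq\, \approx$ follows once we know $\wh{C} \subseteq\, \approx$ (as $\approx$ is a homotopy relation by (a)): for $\bbp \in C$, writing $\bbp \equiv \bbp \circ 1_{\iota\bbp}^{-1} \sim_0 1_{\iota\bbp}^{-1} \circ (1 \cdot \bbp \cdot 1) \circ 1_{\iota\bbp}$ shows $(\bbp, 1_{\iota\bbp}) \in\, \approx$. For $\approx\, \subseteq\, \sim_{\wh{C}}$, observe that $\sim_{\wh{C}}$ contains $\sim_0$ and, by (H3) and transitivity, has the property that a composition of loops each $\sim_{\wh{C}}$-equivalent to an empty path is again $\sim_{\wh{C}}$-equivalent to an empty path; hence it suffices to show that each single factor $\bbp_i^{-1} \circ (v_i \cdot \bbq_i \cdot w_i) \circ \bbp_i$ is $\sim_{\wh{C}}$-equivalent to an empty path. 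This holds because $\bbq_i \sim_{\wh{C}} 1_{\iota\bbq_i}$ --- directly if $\bbq_i \in C$, and by inverting via (H3) and (H4) if $\bbq_i \in C^{-1}$ --- so that $v_i \cdot \bbq_i \cdot w_i \sim_{\wh{C}} 1$ by (H2), and then $\bbp_i^{-1} \circ (v_i \cdot \bbq_i \cdot w_i) \circ \bbp_i \sim_{\wh{C}} \bbp_i^{-1} \circ \bbp_i \sim_0 1$ by (H3) and (H4). Combining, $\approx\, =\, \sim_{\wh{C}}$, which finishes the proof.

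I expect the only genuine work to be bookkeeping rather than anything conceptual: carefully tracking the initial and terminal vertices of all the paths involved when verifying (H2) and (H3) for $\approx$, and cleanly handling the passage between the statement's condition on \emph{circuits} and the relation $\sim_{\wh{C}}$ on all \emph{parallel paths}, via the identity $\bbp \approx \bbq \iff \bbp \circ \bbq^{-1} \approx 1_{\iota\bbp}$. Once the auxiliary relation $\approx$ is set up, everything else is a routine check.
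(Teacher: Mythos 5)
Your argument is correct. Note that the paper itself offers no proof to compare against: Lemma~\ref{lem_homgen} is quoted verbatim from Kobayashi's paper (\cite{Kobayashi2000}, Lemma~2.1), and your route --- introducing the auxiliary relation $\approx$ defined by ``$\bbp\circ\bbq^{-1}$ is $\sim_0$-equivalent to a product of conjugates of elements of $C\cup C^{-1}$'', checking (H1)--(H4) for it, and identifying it with $\sim_{\wh{C}}$ where $\wh{C}=\{(\bbq,1_{\iota\bbq}):\bbq\in C\}$ --- is exactly the standard way this lemma is established. The only points you leave implicit are routine and are instances of maneuvers you already carry out elsewhere: the inclusion $\sim_0\,\subseteq\,\approx$ (needed when you say (H1) and (H4) ``already hold for $\sim_0$, hence for $\approx$'') is just your $n=0$ clause applied to $\bbx\circ\bby^{-1}$, and in the inclusion $\approx\,\subseteq\,\sim_{\wh{C}}$ the final passage from $\bbp\circ\bbq^{-1}\sim_{\wh{C}}1_{\iota\bbp}$ to $\bbp\sim_{\wh{C}}\bbq$ is the same right-composition-with-$\bbq$ step you spell out in your first paragraph. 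With those two remarks made explicit, the proof is complete.
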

Let us conclude this subsection by describing a standard method for obtaining a (possibly infinite) homotopy base for a presentation.
Let $\alpha_1 \cdot \bbe_1 \cdot \beta_1$ and $\alpha_2 \cdot \bbe_2 \cdot \beta_2$ be two edges of a derivation graph $\Gamma(\gp)$ such that $\alpha \equiv \alpha_1 u_1 \beta_1 \equiv \alpha_2 u_2 \beta_2$, where
\[
\bbe_1 = (1, u_1 = v_1, +1, 1), \quad 
\bbe_2 = (1, u_2 = v_2, +1, 1).
\]
We call a path \[\bbp = (\alpha_1 \cdot \bbe_1^{-1} \cdot \beta_1) \circ (\alpha_2 \cdot \bbe_2 \cdot \beta_2)\] a \emph{peak}. If $u_1$ and $u_2$ do not overlap in $\alpha$ (that is, if $|\alpha_1 u_1| \leq |\alpha_2|$ or $|\alpha_1| \geq |\alpha_2 u_2|$) then $\bbp$ is called a \emph{disjoint peak}.

If the peak $\bbp = (\alpha_1 \cdot \bbe_1^{-1} \cdot \beta_1) \circ (\alpha_2 \cdot \bbe_2 \cdot \beta_2)$ is not disjoint then, up to symmetry, the situation breaks down into the following two cases:
\begin{enumerate}
\item[(i)] $u_1$ is a factor of $u_2$, that is, $u_2$ can be written as $u_2 \equiv \gamma_1 u_1 \gamma_2$ for some $\gamma_1, \gamma_2 \in A^*$, or
\item[(ii)] $u_1$ overlaps with $u_2$ on the left, that is, $u_1 \gamma_1 \equiv \gamma_2 u_2$ for some $\gamma_1, \gamma_2 \in A^+$ satisfying $|\gamma_2| < |u_1|$.
\end{enumerate}
In case (i) we have \[\bbp = \alpha_2 \cdot ( (\gamma_1 \cdot \bbe_1^{-1} \cdot \gamma_2) \circ \bbe_2) \cdot \beta_2,\] while in case (ii) we have \[\bbp = \alpha_1 \cdot ( (\bbe_1^{-1} \cdot \gamma_1) \circ (\gamma_2 \cdot \bbe_2)) \cdot \beta_2.\] The paths $(\gamma_1 \cdot \bbe_1^{-1} \cdot \gamma_2) \circ \bbe_2$ and $(\bbe_1^{-1} \cdot \gamma_1) \circ (\gamma_2 \cdot \bbe_2)$ are called \emph{critical peaks}. A critical peak $\bbq$ is \emph{resolvable} is there exists $w \in A^*$ and positive paths $\bbp_1$ from $\iota \bbq$ to $w$, and $\bbp_2$ from $\tau \bbq$ to $w$, in which case we call $\bbp_1^{-1} \circ \bbq \circ \bbp_2$ a \emph{critical circuit}.

Recall that a rewriting system $R$ is called \emph{complete} (or \emph{convergent}) if it is noetherian and confluent. This means that $R$ does not admit any infinite reduction sequences, and whenever $w$ reduces to two strings $u$ and $v$, then $u$ and $v$ have a common descendant in the system. It is well known (see for instance \cite{Book1}) that a noetherian system is confluent if and only if every critical peak is resolvable.

The following lemma is the essential part of Squier's theorem \cite{Squier1} stating that a monoid defined by a finite complete rewriting system has FDT.

\begin{lem}
\label{lem_criticalcircuits}
If $\gp = \lb A | R \rb$ is a complete rewriting system, then the set of critical circuits forms a homotopy base for $\Gamma(\gp)$.
\end{lem}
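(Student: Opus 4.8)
The plan is to verify the criterion of Lemma~\ref{lem_homgen}: taking $C$ to be the set of critical circuits, I must show that every circuit $\bbp$ in $\Gamma(\gp)$ is $\sim_0$-equivalent to a product of translates of elements of $C \cup C^{-1}$ conjugated by arbitrary paths. Since $R$ is complete, it is noetherian, so I can argue by noetherian induction on vertices, using the well-founded order $\rightarrow_R^{+}$ on $A^*$. The key structural fact is that any circuit decomposes, up to $\sim_0$, into \emph{peaks}: using (H4) to cancel backtracking and (H3) to splice, and using the fact that every positive path from a vertex eventually reaches an $R$-irreducible normal form, one reduces an arbitrary circuit to a $\sim_0$-combination of peaks of the form $(\alpha_1 \cdot \bbe_1^{-1} \cdot \beta_1)\circ(\alpha_2 \cdot \bbe_2 \cdot \beta_2)$ between the top vertex and two vertices lower in the order, together with two positive paths continuing down to the common normal form.

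The heart of the argument is then to resolve a single peak. If the peak is \emph{disjoint}, the two rewrites are applied in non-overlapping regions, so (H1) together with (H2) shows the peak is $\sim_0$-equivalent to the corresponding positive ``commuting square'' path, which strictly decreases the top vertex; here no element of $C$ is needed. If the peak is not disjoint, then by the case analysis preceding the lemma it is a translate $\gamma\cdot(\text{critical peak})\cdot\delta$ of one of the critical peaks $\bbq$ described above. Since $R$ is noetherian and confluent, every critical peak is resolvable, so there is a critical circuit $\bbp_1^{-1}\circ\bbq\circ\bbp_2 \in C$ with $\bbp_1,\bbp_2$ positive; translating by $\gamma,\delta$ and using (H2), the peak is $\sim_0$-replaced (modulo an element of $C$) by $\gamma\cdot\bbp_1\cdot\delta$ followed by the reverse of $\gamma\cdot\bbp_2\cdot\delta$, both positive paths ending strictly lower than the peak's top vertex.

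In either case the original circuit through the top vertex $v$ is rewritten, modulo $\sim_0$ and modulo finitely many conjugated translates of elements of $C$, into a product of circuits each of whose top vertex is strictly below $v$ in the noetherian order; the inductive hypothesis then finishes. The main obstacle, and the step requiring the most care, is the bookkeeping in the peak-decomposition and peak-resolution: one must track the ``tails'' (the positive paths down to the common descendant) carefully so that after applying (H1)/(H3)/(H4) every resulting circuit genuinely has a strictly smaller top vertex, so that the noetherian induction is well-founded. I would organise this exactly as in Squier~\cite{Squier1}, first proving the disjoint-peak case via (H1), then reducing the overlapping case to critical peaks, then assembling via Lemma~\ref{lem_homgen}.
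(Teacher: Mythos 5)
Your outline is correct and is essentially the argument the paper relies on: the paper offers no proof of this lemma, citing Squier \cite{Squier1}, and what you describe is exactly that classical peak-resolution argument (every nonempty circuit contains a peak at a maximal vertex, disjoint peaks are resolved via (H1)--(H2), overlapping peaks are translated critical peaks resolved modulo a critical circuit, and the noetherian induction is on the multiset of peak-top vertices under the well-founded reduction order, assembled via Lemma~\ref{lem_homgen}). Note also that your argument nowhere uses finiteness of $R$, which is consistent with the paper's remark that the lemma applies to infinite complete systems.
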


Note that the above lemma applies even in the case that the rewriting system is infinite.

For any rewriting system we can find an infinite homotopy base using the following general approach.
Let $\gp = \lb A | R \rb$ be monoid presentation and let $\overline{R}$ be a complete rewriting system that contains $R$ and it is equivalent to $R$, meaning that $\lb A | R \rb$ and $\lb A | \overline{R} \rb$ are Tietze equivalent. Such a system always exist by standard results; see \cite{Book1}. Let $\Gamma = \Gamma(\gp)$ and $\overline{\Gamma} = \Gamma(\lb A | \overline{R} \rb)$ be the corresponding derivation graphs. For each edge $\bbe$ of $\overline{\Gamma}$ choose a path $\bbp_\bbe$ in $\Gamma$ as follows. If $\bbe \in \Gamma$ then take $\bbp_\bbe = \bbe$, otherwise fix some path in $\Gamma$ that leads from $\iota \bbe$ to $\tau \bbe$. Such a path exists since $R$ and $\overline{R}$ are equivalent. Then let $\varphi: P(\overline{\Gamma}) \rightarrow P(\Gamma)$ be the map extending $\bbe \mapsto \bbp_\bbe$ in the obvious natural way. Then we have the following. 
\begin{lem}\cite[Lemma~2.4]{Kobayashi5}
\label{lem_infinitetofinite}
If $C$ is a set of resolutions of all critical peaks of $\overline{\Gamma}$ then $\varphi(C)$ is a homotopy base for $\Gamma$.
\end{lem}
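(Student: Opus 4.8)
The plan is to transport a homotopy base from $\overline{\Gamma}$ to $\Gamma$ along $\varphi$, producing one on $\overline{\Gamma}$ via Lemma~\ref{lem_criticalcircuits} and recognising one on $\Gamma$ via the characterisation in Lemma~\ref{lem_homgen}. Since $\overline{R}$ is complete, Lemma~\ref{lem_criticalcircuits} tells us that the given set $C$ of resolutions of all critical peaks of $\overline{\Gamma}$ is a homotopy base for $\overline{\Gamma}$; that is, $\sim_C$ coincides with $\parallel$ on $P(\overline{\Gamma})$. Because $R\subseteq\overline{R}$ we have $\Gamma\subseteq\overline{\Gamma}$ (they share the vertex set $A^*$), so $P(\Gamma)\subseteq P(\overline{\Gamma})$, the map $\varphi$ restricts to the identity on $P(\Gamma)$, and since $\varphi$ preserves initial and terminal vertices it sends circuits to circuits; thus $\varphi(C)$ is a set of circuits in $\Gamma$. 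At the outset I would also require the connecting paths $\bbp_\bbe$ to be chosen coherently with the action of $A^*$ (which is harmless): pick one path in $\Gamma$ from $r_{+1}$ to $r_{-1}$ for each rule $r\in\overline{R}\setminus R$, take its reverse for the opposite orientation, and for a general edge $\bbe=w_1\cdot(1,r,\epsilon,1)\cdot w_2$ put $\bbp_\bbe=w_1\cdot\bbp_{(1,r,\epsilon,1)}\cdot w_2$ (and $\bbp_\bbe=\bbe$ when $r\in R$). With this choice $\varphi\colon P(\overline{\Gamma})\to P(\Gamma)$ commutes with composition, with path inversion, and with the two-sided action of $A^*$, and in particular $\varphi(C^{-1})=\varphi(C)^{-1}$.

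The key step is to show that $\varphi$ carries $\sim_0$ on $P(\overline{\Gamma})$ into $\sim_0$ on $P(\Gamma)$. To this end I would consider the relation $\approx$ on $P(\overline{\Gamma})$ defined by $\bba\approx\bbb$ if and only if $\varphi(\bba)\sim_0\varphi(\bbb)$ in $\Gamma$, and verify that $\approx$ is a homotopy relation on $P(\overline{\Gamma})$. It is contained in $\parallel$ because $\varphi$ preserves endpoints. Axioms (H2), (H3) and (H4) for $\approx$ follow at once from the corresponding axioms for $\sim_0$ on $\Gamma$, using that $\varphi$ commutes with the $A^*$-action, with composition, and with inversion. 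The one substantial point is axiom (H1): after applying $\varphi$ it reduces to the statement that for paths $\bba\colon u\to u'$ and $\bbb\colon v\to v'$ in $\Gamma$ one has $(\bba\cdot v)\circ(u'\cdot\bbb)\sim_0(u\cdot\bbb)\circ(\bba\cdot v')$, the familiar fact that two disjoint sequences of rewriting steps commute up to $\sim_0$; this is proved by a double induction on the numbers of edges of $\bba$ and of $\bbb$, with base case (H1) for single edges and inductive steps using (H2) and (H3). Since $\sim_0$ on $P(\overline{\Gamma})$ is the smallest homotopy relation, it is contained in $\approx$, which is exactly the implication $\bba\sim_0\bbb\Rightarrow\varphi(\bba)\sim_0\varphi(\bbb)$.

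It then remains to assemble the pieces. Let $\bbp$ be an arbitrary circuit in $\Gamma$; viewing it as a circuit in $\overline{\Gamma}$ and applying Lemma~\ref{lem_homgen} to the homotopy base $C$ of $\overline{\Gamma}$, we may write $\bbp\sim_0\bbr_1^{-1}\circ(v_1\cdot\bbq_1\cdot w_1)\circ\bbr_1\circ\cdots\circ\bbr_n^{-1}\circ(v_n\cdot\bbq_n\cdot w_n)\circ\bbr_n$ in $\overline{\Gamma}$, with $v_i,w_i\in A^*$, $\bbr_i\in P(\overline{\Gamma})$ and $\bbq_i\in C\cup C^{-1}$. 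Applying $\varphi$, using $\varphi(\bbp)=\bbp$, that $\varphi$ preserves $\sim_0$, and that it commutes with composition, inversion and the action, we obtain $\bbp\sim_0\varphi(\bbr_1)^{-1}\circ(v_1\cdot\varphi(\bbq_1)\cdot w_1)\circ\varphi(\bbr_1)\circ\cdots\circ\varphi(\bbr_n)^{-1}\circ(v_n\cdot\varphi(\bbq_n)\cdot w_n)\circ\varphi(\bbr_n)$ in $\Gamma$, where each $\varphi(\bbr_i)\in P(\Gamma)$ and $\varphi(\bbq_i)\in\varphi(C)\cup\varphi(C)^{-1}$. By the converse implication of Lemma~\ref{lem_homgen}, this shows that $\varphi(C)$ is a homotopy base for $\Gamma$.

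The main obstacle is the second paragraph — checking that $\varphi$ respects $\sim_0$ — and within it the verification of axiom (H1), which is precisely where the \emph{disjoint steps commute} lemma, and hence the coherent choice of the connecting paths $\bbp_\bbe$, is genuinely used; the remaining steps are routine bookkeeping with the axioms for homotopy relations and with Lemma~\ref{lem_homgen}.
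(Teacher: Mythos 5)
Your proposal is correct, but note that the paper offers no proof of this statement at all: it is imported verbatim from Kobayashi and Otto \cite[Lemma~2.4]{Kobayashi5}, so the comparison is really with the standard argument behind that citation, which is essentially what you reconstruct. Your chain of reasoning is sound: $C$ is a homotopy base for $\overline{\Gamma}$ by Lemma~\ref{lem_criticalcircuits}; the relation $\bba\approx\bbb\Leftrightarrow\varphi(\bba)\sim_0\varphi(\bbb)$ is a homotopy relation on $P(\overline{\Gamma})$ (with (H1) handled by the ``disjoint steps commute'' induction), so $\varphi$ preserves $\sim_0$; and Lemma~\ref{lem_homgen}, applied first in $\overline{\Gamma}$ and then, after applying $\varphi$, in $\Gamma$, yields that $\varphi(C)$ is a homotopy base. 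The one point worth stressing is the role of your ``coherence'' hypothesis: you need the connecting paths to be chosen once per rule of $\overline{R}\setminus R$ and extended equivariantly, i.e. $\bbp_{w_1\cdot\bbe\cdot w_2}=w_1\cdot\bbp_{\bbe}\cdot w_2$ and $\bbp_{\bbe^{-1}}=\bbp_{\bbe}^{-1}$; without this the (H2) step fails, since $\varphi(x\cdot\bba\cdot y)$ and $x\cdot\varphi(\bba)\cdot y$ would merely be parallel paths in $\Gamma$, and parallel paths need not be $\sim_0$-related, so preservation of $\sim_0$ would not follow. The paper's informal set-up (``for each edge \ldots fix some path'') sounds more permissive, but the construction in the cited source is the equivariant one, and so is the paper's own application (the paths $C_{w,\epsilon,\delta}$ are defined so that $\varphi(\alpha\cdot\overline{C}_{w,\epsilon,\delta}\cdot\beta)=\alpha\cdot C_{w,\epsilon,\delta}\cdot\beta$), so your added hypothesis is exactly what is used and does not constitute a gap.
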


\subsection*{Green's relations and Sch\"{u}tzenberger groups}

The rest of this section is spent recalling some fundamental ideas from the structure theory of semigroups.
For more details about Green's relations, and other basic notions from semigroup theory, we refer the reader to \cite{Howie1}, or more recently \cite{SteinbergBook2009}.

One obtains significant information about a semigroup by considering its ideal structure. Since their introduction in \cite{Green1}, Green's relations have become a fundamental tool for describing the ideal structure of semigroups.
If $S$ is a monoid then Green's relations $\gr$, $\gl$ and $\gh$ are defined by $a \gr b$ if and only if $aS = bS$, $a \gl b$ if and only if $Sa = Sb$, and $\gh = \gr \cap \gl$.
Clearly each of $\gr$, $\gl$ and $\gh$ is an equivalence relation on $S$.
The importance of the $\gh$ relation becomes clear when one begins investigating the subgroups of a monoid (that is, those subsemigroups which form  groups under the semigroup operation). If $H$ is an $\gh$-class containing an idempotent $e$ (i.e. an element satisfying $e^2 = e$) then $H$ is a maximal subgroup (with respect to inclusion) of $S$, with identity $e$, and conversely every maximal subgroup of $S$ arises in this way. Thus maximal subgroups and group $\gh$-classes are one and the same.

As mentioned in the introduction, Sch\"{u}tzenberger \cite{Schutzenberger1957, Schutzenberger1958} showed that in a natural way one may associate a group $\gG(H)$ with an arbitrary $\gh$-class $H$ of a monoid. This is done in such a way that if $H$ does contain an idempotent, and thus is a maximal subgroup of $S$, then $H \cong \gG(H)$, so  the notion of Sch\"{u}tzenberger group directly generalises that of maximal subgroup.

Let $S$ be a monoid, let $H$ be an $\gh$-class of $S$, and let $h \in H$ be an arbitrary fixed element of $H$. The Sch\"{u}tzenberger group of $H$ is obtained by taking the setwise stabilizer of $H$ under the right multiplicative action of $S$ on itself, and making it faithful. More precisely, let $\mathrm{Stab}(H)$ denote the right setwise stabiliser of the set $H$, so
\[
\mathrm{Stab}(H) = \{ s \in S: Hs = H  \},
\]
and define a relation $\sigma = \sigma(H)$ on $\mathrm{Stab}(H)$ by
\[
\sigma(H) = \{ (s,t) \in \mathrm{Stab}(H) \times \mathrm{Stab}(H) : hs = ht \}.
\]
It is easy to see that $\sigma$ is a congruence, which we call the \emph{Sch\"{u}tzenberger congruence} of $H$. It may then be checked that the quotient $\mathrm{Stab}(H) / \sigma$ is a group (whose isomorphism type is independent of the choice of $h \in H$), that we call the \emph{Sch\"{u}tzenberger group} of $H$, and denote by $\gG(H)$.
Of course, there is an obvious dual notion of left Sch\"{u}tzenberger group, but as it turns out the left and right Sch\"{u}tzenberger groups are naturally isomorphic to each other. For proofs of these facts, and more background on Sch\"{u}tzenberger groups, we refer the reader to \cite{Lallement1}.

\section{Proof of Theorem~\ref{thm_themainresult}}
\label{sec_up}

In this section we shall prove our main result Theorem~\ref{thm_themainresult}.
Let us begin by fixing some notation that will remain in force for the rest of the section. 
Let $A = \{a, a^{-1}, b, b^{-1} \}$ and let $R$ denote the set of rules
\[
I_x: xx^{-1} \rightarrow 1 \ (x \in A).
\]
Let $G$ denote the monoid defined by the presentation $\lb A | R \rb$. Clearly $G$ is isomorphic to the free group $F(a,b)$ over $\{a,b\}$.
Let $\lb B | Q \rb$ be the presentation with generators $B = A \cup \{ h \}$ and relations $Q = R \cup R'$ where
\[
R' =
\left\{
\begin{array}{lclcll}
K_x &:& xh & \rightarrow & hx & (x \in A) \\
C_{\epsilon,\delta} &:& ha^\epsilon b^\delta & \rightarrow & hb^\delta a^\epsilon & (\epsilon, \delta \in \{ +1, -1 \}) \\
Z_y &:& h^2 y & \rightarrow & h^2 & (y \in B).
\end{array}
\right.
\]
Here we have assigned names to the rules for easy reference. 
Let $M$ be the monoid defined by the presentation $\lb B | Q \rb$. 
The presentation $\lb B | Q \rb$ is exactly that which appears in the statement of Theorem~\ref{thm_themainresult}. 

The following result  determines a natural set of normal forms for the elements of $M$ which we then use to describe the structure of $M$, thus establishing part (i) of Theorem~\ref{thm_themainresult}.  

\begin{prop}
\label{prop_normalforms}
With the above notation, let $M$ be the monoid defined by the presentation $\lb B | Q \rb$. 
\begin{enumerate}[{\normalfont (i)}]
\item A set of normal forms for the elements of $M$ is given by
\[
\mathcal{N} =
F(a,b) \cup \{ hb^ja^k : j,k \in \mathbb{Z} \} \cup \{ h^2 \},
\]
where $F(a,b)$ denotes the set of all reduced words of the free group over $\{ a,b\}$.
\item The monoid $M$ has exactly three $\gh$-classes which, identifying $M$ with the set of normal forms $\mathcal{N}$, are
	\begin{itemize}
	\item $H_1 = F(a,b)$: a group $\gh$-class isomorphic to the free group $G$ ($H_1$ is the group of units of the monoid $M$);
	\item $H_h = \{ hb^ja^k : j,k \in \mathbb{Z} \}$: a non-group $\gh$-class with Sch\"{u}tzenberger group $\gG(H_h)$ isomorphic to the free abelian group of rank $2$;
	\item $H_0 = \{ h^2 \}$: a two-sided zero element of the monoid, forming a group $\gh$-class isomorphic to the trivial group.
	\end{itemize}
\end{enumerate}
In particular, $M$ has finitely many left and right ideals and each of the finitely many Sch\"{u}tzenberger groups of $M$ admits a presentation by a finite complete rewriting system, and so has finite derivation type.
\end{prop}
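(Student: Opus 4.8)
The plan is to prove part~(i) in two halves: a rewriting argument showing that every word of $B^*$ equals some element of $\mathcal{N}$ in $M$, and an explicit construction of a concrete monoid $T$ showing that distinct elements of $\mathcal{N}$ stay distinct in $M$; part~(ii) and the closing assertions are then read off from the resulting isomorphism $M\cong T$. For the \emph{existence} of normal forms, given $w\in B^*$ let $k$ be the number of occurrences of $h$ in $w$. The relations $K_x$ say that $h$ commutes with every letter of $A$, so one may push all occurrences of $h$ to the left and obtain $w=_M h^k u$ with $u\in A^*$. If $k=0$ then $w\in A^*$, and since $R=\{I_x\}$ is the classical finite complete rewriting system for the free group, $w$ reduces to the reduced word of its image in $F(a,b)=H_1$. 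If $k\geq 2$, then using $Z_y$ together with the easy fact $h^2v=_M h^2$ for all $v\in B^*$ (induction on $|v|$, via $h^2y\to h^2$) we get $w=_M h^2=H_0$. If $k=1$, observe that because $h$ commutes with $A$ and $hxy=_M hyx$, in any word $hw_1xyw_2$ with $w_1,w_2\in A^*$ and $x,y\in A$ one may transpose the adjacent letters $x,y$ (slide $h$ to sit just before $xy$, apply $hxy=hyx$, slide $h$ back); hence, together with free cancellation via $I_x$, the word $hu$ is brought to the form $hb^ja^k$ where $(j,k)$ is the image of $u$ under abelianisation $F(a,b)\to\mathbb{Z}^2$. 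So every element of $M$ has a representative in $\mathcal{N}$.

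For \emph{uniqueness}, I would put a monoid structure on $T=F(a,b)\sqcup\mathbb{Z}^2\sqcup\{0\}$, the three pieces to correspond to $H_1$, to $H_h$ (via $(j,k)\leftrightarrow hb^ja^k$), and to $H_0=\{h^2\}$: the product extends that of $F(a,b)$; $g\cdot(j,k)=(j,k)\cdot g=(j,k)+\overline{g}$ for $g\in F(a,b)$, where $\overline{g}\in\mathbb{Z}^2$ is the abelianisation; $(j,k)\cdot(j',k')=0$; and $0$ is a two-sided zero. One checks $T$ is associative — a short case analysis, and whenever two ``$\mathbb{Z}^2$-level'' elements are multiplied the result is the absorbing element $0$, which trivialises the remaining cases — and that the assignment $a^{\pm1},b^{\pm1}\mapsto$ the generators of $F(a,b)$, $h\mapsto(0,0)$, respects every relation in $Q$. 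Since these images generate $T$, this yields a surjective homomorphism $M\twoheadrightarrow T$ whose composite with $\mathcal{N}\hookrightarrow B^*\twoheadrightarrow M$ is a bijection $\mathcal{N}\to T$; hence $\mathcal{N}\to M$ is injective, $\mathcal{N}$ is a set of normal forms, and $M\cong T$.

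Finally, working inside $T\cong M$, part~(ii) is a direct computation: $F(a,b)$ is exactly the group of units of $M$ (nothing in the $\mathbb{Z}^2$- or $0$-parts is invertible), hence a single group $\gh$-class $H_1$ isomorphic to the free group of rank~$2$; for every $(j,k)$ one has $(j,k)T=T(j,k)=\mathbb{Z}^2\cup\{0\}$, so the $\mathbb{Z}^2$-part is a single $\gh$-class $H_h$, with $\mathrm{Stab}(H_h)=F(a,b)$ and with the Schützenberger congruence (computed at the fixed element $h=hb^0a^0$) identifying $s,t\in F(a,b)$ precisely when $\overline{s}=\overline{t}$, so $\gG(H_h)\cong\mathbb{Z}^2$, the free abelian group of rank~$2$; and $\{h^2\}$ is a two-sided zero, a trivial group $\gh$-class $H_0$. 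In particular $M$ has exactly three $\gh$-classes, three $\gr$-classes and three $\gl$-classes, hence finitely many left and right ideals. Each Schützenberger group admits a finite complete rewriting system — the trivial group via the empty presentation, $F(a,b)$ via $\{xx^{-1}\to 1:x\in A\}$, and $\mathbb{Z}^2$ via the standard finite complete rewriting system for free abelian groups — so by Lemma~\ref{lem_criticalcircuits} each has finite derivation type. The only genuinely delicate point in this argument is the verification of associativity of $T$, equivalently the uniqueness half of part~(i); everything else is routine.
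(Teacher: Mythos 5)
Your proof is correct, and for the crux of the statement---uniqueness of the normal forms---it takes a genuinely different route from the paper. The paper proves (i) by enlarging $Q$ to the infinite rewriting system $\overline{Q}$ (adding the rules $hwa^\epsilon b^\delta \rightarrow hwb^\delta a^\epsilon$ for all $w \in A^*$), checking that this system is noetherian via a length-plus-lexicographic order and confluent via critical-peak analysis, and reading off $\mathcal{N}$ as the set of irreducible words; that machinery is not wasted, since the same system $\overline{Q}$ and its critical circuits are reused immediately afterwards to build the infinite homotopy base needed for the proof of Theorem~\ref{thm_themainresult}(ii). You instead establish uniqueness by constructing the concrete monoid $T = F(a,b)\sqcup\mathbb{Z}^2\sqcup\{0\}$ (the free group acting on $\mathbb{Z}^2$ through abelianisation, with an absorbing zero), checking associativity and that the defining relations of $\lb B \mid Q\rb$ are respected, so that the induced homomorphism $M \rightarrow T$ separates the elements of $\mathcal{N}$; combined with your direct rewriting argument for existence this yields $M \cong T$. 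Your route is more elementary and self-contained (no confluence check on an infinite system), and it renders part (ii) essentially transparent, since Green's relations, $\mathrm{Stab}(H_h)$ and the Sch\"utzenberger congruence are computed inside $T$ exactly as the paper computes them on normal forms; what it does not deliver is the complete system $\overline{Q}$ itself, which the paper needs again later. Two small points worth adding to your write-up: note explicitly that $H_h$ contains no idempotent (immediate, since the product of any two of its elements is the zero), so it really is a non-group $\gh$-class; and when you invoke Lemma~\ref{lem_criticalcircuits} for the Sch\"utzenberger groups, say that the rewriting systems in question are finite, so they have only finitely many critical peaks and hence the resulting homotopy bases are finite---this is precisely Squier's theorem in the form the paper uses it.
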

\begin{proof}
(i) We shall see below that by adding the infinitely many additional rules 
\[
\overline{C}_{w,\epsilon,\delta} : hwa^\epsilon b^\delta \rightarrow hw b^\delta a^\epsilon
\quad
(\epsilon, \delta \in \{ +1, -1 \}, \; w \in A^*)
\]
to $Q$ we obtain an infinite complete rewriting system equivalent to $Q$, from which the normal forms $\mathcal{N}$ can easily be read off as the irreducible words of the system.  

(ii) Working with the set of normal forms it is easy to check that $h^2 \mathcal{N} = \mathcal{N} h^2 = H_0$, \[hb^j a^k \mathcal{N} = \mathcal{N} h b^j a^k = H_h \cup H_0 \ (\mbox{for all} \; j,k \in \mathbb{Z}),\] and $u \mathcal{N} = \mathcal{N} u = \mathcal{N}$ $(\mbox{for all} \; u \in F(a,b))$. From this we deduce that $M$ has three $\gh$-classes $H_0$, $H_h$ and $H_1$.
The only remaining part of (ii) that may not be immediately obvious is the claim that the Sch\"{u}tzenberger group $\gG(H_h)$ is isomorphic to the free abelian group of rank $2$. To see this, observe that $\mathrm{Stab}(H_h) = F(a,b)$, and computing the Sch\"{u}tzenberger congruence $\sigma$ we see that for all $w_1, w_2 \in A^*$ we have
\[
(w_1,w_2) \in \sigma \Leftrightarrow hw_1 = hw_2,
\]
which holds if and only if one can transform $hw_1$ into $hw_2$ by applying the relations $I_x$, $K_x$ and $C_{\epsilon,\delta}$. Clearly this is equivalent to saying that $w_1$ and $w_2$ are words representing the same element of the free abelian group over $\{a,b\}$. Thus $\mathrm{Stab}(H_h)/\sigma = F(a,b)/\sigma$ is isomorphic to the free abelian group of rank $2$.
\end{proof}
Given a word $w \in B^*$ we shall use $\overline{w}$ to denote the unique word from the set of normal forms $\mathcal{N}$ which is equal to $w$ in $M$. In particular, given $w \in A^*$, $\overline{w}$ is the reduced word in the free group $F(a,b)$ equal to $w$.

Since $M$ has a zero element it follows that $M$ is of type left- and right-$\FPinfty$ by a recent observation of Kobayashi \cite{Kobayashi2009}. It follows from Proposition~\ref{prop_normalforms}, together with the fact that both free groups and free abelian groups have word problems solvable in linear time (see \cite{Wrathall1988}), that $M$ has a linear time solvable word problem. 

The rest of this section will be devoted to showing that the monoid $M$ does not have FDT, 
and hence is not presentable by a finite complete rewriting system, 
thus proving Theorem~\ref{thm_themainresult}(ii).

\subsection*{Outline of the proof of Theorem~\ref{thm_themainresult}(ii)}

Our approach to the proof of Theorem~\ref{thm_themainresult}(ii) is as follows. First we apply the general method described in Section~\ref{sec_prelims} using Lemmas~\ref{lem_criticalcircuits} and \ref{lem_infinitetofinite} to obtain an infinite homotopy base $\mathcal{C} \cup \mathcal{Z}$ (where $\mathcal{C}$ and $\mathcal{Z}$ will be defined below) for the derivation graph $\Gamma$ of $\lb B | Q \rb$. We then define in a natural way a mapping $\Phi: P(\Gamma) \rightarrow \ZM$ which sends each path of $\Gamma$ to some element of the integral monoid ring $\ZM$. Next we go on to observe that by restricting $\Phi$ to the set of paths $\mathcal{C}$ we obtain a subset $\Phi(\mathcal{C})$ of $\ZG$, and moreover, that if $M$ had FDT then $\Phi(\mathcal{C})$ would generate a finitely generated submodule of the right $\ZG$-module $\ZG$ (Lemma~\ref{lem_passtoZG}). Using the fact that $G = F(a,b)$ is a group, this in turn would imply that a certain subgroup $HN$ (where $N$ is the commutator subgroup of $G$ and $H$ is the cyclic subgroup generated by $a$) of $G$ would have to be finitely generated (Corollary~\ref{cor_HN}). But $HN$ has infinite index in $G$ (Lemma~\ref{lem_nolabel}) which, since $G$ is free and $HN$ contains the non-trivial normal subgroup $N$ of $G$, implies, by a classical result from combinatorial group theory (Theorem~\ref{thm_MKS}), that $HN$ is not finitely generated and thus $M$ does not have FDT.

We begin by finding an infinite homotopy base.

\subsection*{\boldmath Completing $Q$ to an infinite complete equivalent system $\overline{Q}$}

For each $w \in A^*$ and $\epsilon, \delta \in \{ +1, -1 \}$ define the rule
\[
\overline{C}_{w,\epsilon,\delta} : hwa^\epsilon b^\delta \rightarrow hw b^\delta a^\epsilon.
\]
Note that in particular we have $\overline{C}_{1,\epsilon,\delta} = C_{\epsilon,\delta}$. Let 
\[
\overline{Q}
=
Q \cup \{ \overline{C}_{w,\epsilon,\delta} : w \in A^*, \epsilon, \delta \in \{+1,-1\} \}.
\]
By considering the (left-to-right) length-plus-lexicographic ordering on $B^*$ induced by $a > a^{-1} > b > b^{-1} > h$ one sees that the rewriting system  $\overline{Q}$ is noetherian. Then a routine analysis of the critical peaks (the most important of which are displayed in Figure~\ref{fig_criticalcircuits}) shows that $\overline{Q}$ is an infinite complete rewriting system that is equivalent to $Q$.
Let $\Gamma$ denote the derivation graph of $\lb B | Q \rb$, and $\overline{\Gamma}$ the derivation graph of $\lb B | \overline{Q} \rb$. Let $\Gamma_Z$ denote the unique connected component of $\Gamma$ with vertex set the set of all words in $B^*$ with at least two occurrences of the letter $h$. In other words, $\Gamma_Z$ is the connected component of all words representing the zero element of the monoid $M$. Likewise let $\overline{\Gamma_Z}$ be the connected component of $\overline{\Gamma}$ with the same vertex set as $\Gamma_Z$.

\subsection*{\boldmath An infinite homotopy base $\overline{\mathcal{C}} \cup \overline{\mathcal{Z}}$ for $\overline{\Gamma}$}

The derivation graph $\overline{\Gamma}$ contains the critical circuits displayed in Figure~\ref{fig_criticalcircuits}.
Let $\overline{\mathcal{C}}$ denote the collection of all paths $\mathrm{(\overline{CT1})}$--$\mathrm{(\overline{CT7})}$ displayed in Figure~\ref{fig_criticalcircuits}, and let $\overline{\mathcal{Z}}$ denote a fixed set of critical circuits given by resolving each of the critical peaks contained in the connected component $\overline{\Gamma_Z}$.
A routine systematic check of possible overlaps of left hand sides of rules from $\overline{Q}$ reveals that $\overline{\mathcal{C}} \cup \overline{\mathcal{Z}}$ constitutes a complete set of resolutions of all possible critical peaks of the system $\overline{Q}$. Thus, by Lemma~\ref{lem_criticalcircuits},  $\overline{\mathcal{C}} \cup \overline{\mathcal{Z}}$ is a homotopy base for $\overline{\Gamma}$.

\subsection*{\boldmath An infinite homotopy base $\mathcal{C} \cup \mathcal{Z}$ for ${\Gamma}$}

The edge $\overline{C}_{w,\epsilon,\delta}$ of $\overline{\Gamma}$ is realised by the path ${C}_{w,\epsilon,\delta}$ in $\Gamma$ defined by first setting $C_{1,\epsilon,\delta} = \overline{C}_{1,\epsilon,\delta} = C_{\epsilon,\delta}$ and then defining inductively ${C}_{w,\epsilon,\delta}$ by
\begin{align}\label{eqn_C}
hxw'a^\epsilon b^\delta
\xrightarrow{K_x^{-1} \cdot w' a^\epsilon b^\delta}
xhw'a^\epsilon b^\delta
\xrightarrow{x \cdot {C}_{w',\epsilon,\delta}}
xhw'b^\delta a^\epsilon
\xrightarrow{K_x \cdot w' b^\delta a^\epsilon}
hxw'b^\delta a^\epsilon
\end{align}
where $\epsilon, \delta \in \{ +1, -1 \}$, and  $w \equiv xw'$ with $x \in A$ and $w' \in A^*$.
Let $\varphi: P(\overline{\Gamma}) \rightarrow P(\Gamma)$ be the map given by $\varphi(\alpha \cdot \overline{C}_{w,\epsilon,\delta} \cdot \beta) = \alpha \cdot C_{w,\epsilon,\delta} \cdot \beta$, for all $\alpha, \beta \in B^*$, and defined to be the identity on every other edge of $\overline{\Gamma}$.
Let $\mathcal{C} = \varphi(\overline{\mathcal{C}})$ and $\mathcal{Z} = \varphi(\overline{\mathcal{Z}})$.
Since $\overline{\mathcal{C}} \cup \overline{\mathcal{Z}}$ is a homotopy base for $\overline{\Gamma}$
it follows from Lemma~\ref{lem_infinitetofinite} that $\mathcal{C} \cup \mathcal{Z}$ is a homotopy base for $\Gamma$.

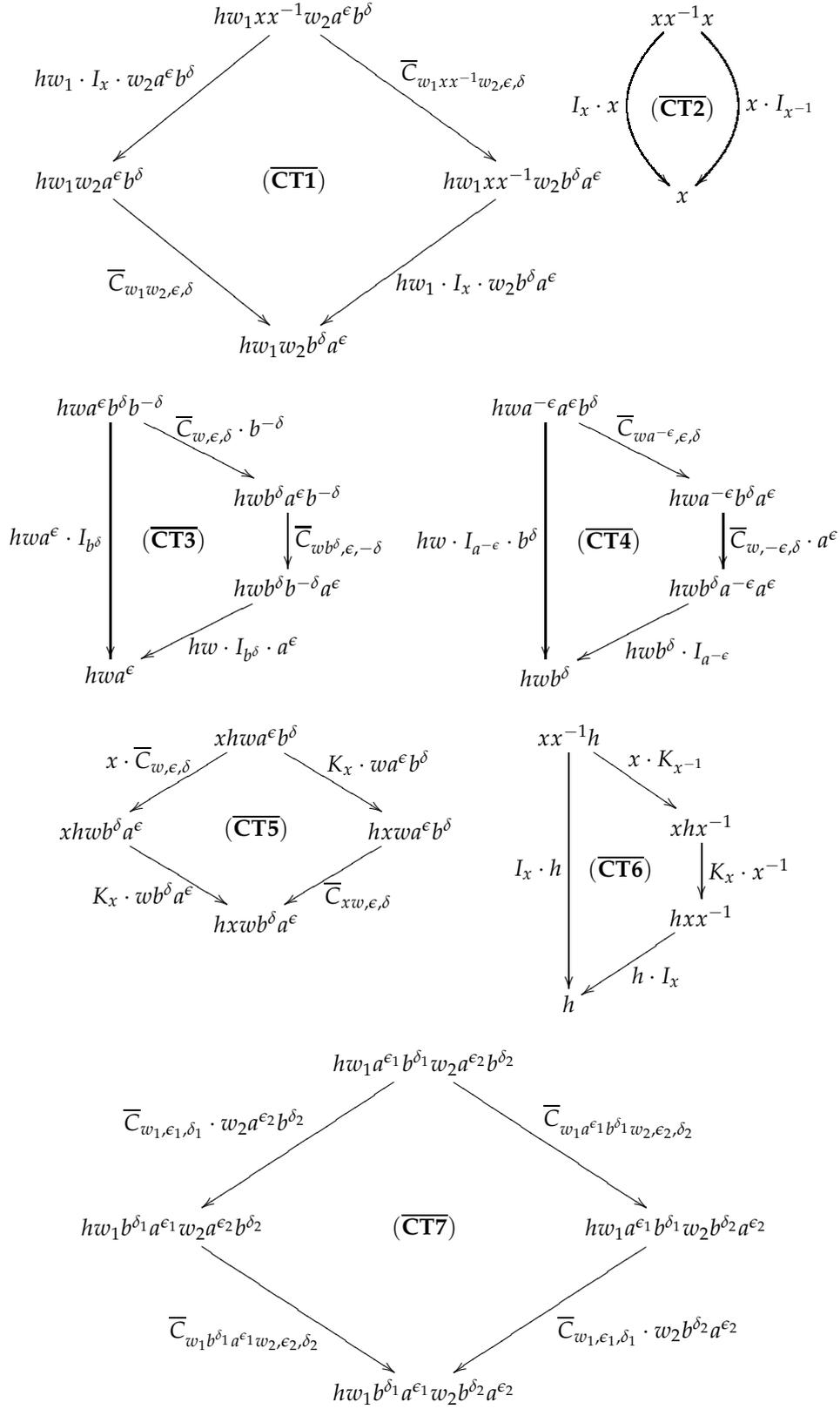
\begin{figure}
\begin{small}
$$
\begin{array}{cccc}
\xymatrix{						&		\ar[ddl]_*{hw_1 \cdot I_x \cdot w_2a^{\epsilon}b^{\delta} \  }	 	hw_1xx^{-1}w_2a^{\epsilon}b^{\delta}
\ar[ddr]^*{\  \overline{C}_{w_1xx^{-1}w_2,\epsilon,\delta}  }		&							\\
				&				&				\\
				hw_1w_2a^{\epsilon}b^{\delta} \ar[ddr]_*{\overline{C}_{w_1w_2,\epsilon,\delta}  }	&			\mathbf{(\overline{CT1})}								 &		hw_1xx^{-1}w_2b^{\delta}a^{\epsilon}   \ar[ddl]^*{\   hw_1 \cdot I_x \cdot w_2b^{\delta}a^{\epsilon}    }		\\
				&				&				\\
								&						hw_1w_2b^{\delta}a^{\epsilon}					&					}
&
&
\hspace{-14mm}
\xymatrix{	xx^{-1}x
			\ar@/_2pc/[dd]_*{I_x \cdot x}  		\ar@/^2pc/[dd]^*{x \cdot I_{x^{-1}}}
		\\      \mathbf{(\overline{CT2})}
		\\ 	x 									}
\end{array}
$$

$$
\begin{array}{cccc}
\xymatrix{	hwa^{\epsilon} b^{\delta} b^{-\delta} 	\ar[ddd]^*{\ \ \ \ \mathbf{(\overline{CT3})}}_*{hw a^{\epsilon} \cdot I_{b^{\delta}}}	 \ar[dr]^*{\overline{C}_{w, \epsilon, \delta} \cdot b^{-\delta}}			& 							\\
														&		hwb^{\delta} a^{\epsilon} b^{-\delta}	\ar[d]^*{\overline{C}_{wb^{\delta}, \epsilon, -\delta}}		\\
														&		hw b^{\delta} b^{-\delta} a^{\epsilon}	\ar[dl]^*{\ \ \ hw \cdot I_{b^\delta} \cdot a^{\epsilon}}		\\
			hwa^{\epsilon}											&							}
&
&
\hspace{-4mm}
\xymatrix{	hwa^{-\epsilon}a^{\epsilon}b^{\delta} 	\ar[ddd]^*{\ \ \ \ \mathbf{(\overline{CT4})}}_*{hw \cdot I_{a^{-\epsilon}} \cdot b^\delta}	 \ar[dr]^*{\overline{C}_{wa^{-\epsilon}, \epsilon, \delta}}			& 							\\
														&		hwa^{-\epsilon}b^{\delta}a^{\epsilon}	\ar[d]^*{\overline{C}_{w,-\epsilon,\delta} \cdot	 a^\epsilon }	\\
														&		hwb^{\delta}a^{-\epsilon}a^{\epsilon}	\ar[dl]^*{\ \ \ hwb^\delta \cdot I_{a^{-\epsilon}}}		\\
			hwb^{\delta}											&							}
\end{array}
$$

$$
\begin{array}{cccc}
\xymatrix{						&		\ar[dl]_*{x \cdot \overline{C}_{w,\epsilon,\delta}}	 	xhwa^{\epsilon} b^{\delta}	\ar[dr]^*{\ \ K_x \cdot wa^{\epsilon} b^{\delta}}		&							\\
				xhwb^{\delta}a^{\epsilon}	\ar[dr]_*{K_x \cdot wb^{\delta} a^{\epsilon}}	&				\mathbf{(\overline{CT5})}							 &		hxwa^{\epsilon}b^{\delta}	\ar[dl]^*{\overline{C}_{xw,\epsilon,\delta}}		\\
								&						hxwb^{\delta}a^{\epsilon}					&					}
&
&
\xymatrix{	xx^{-1}h 	\ar[ddd]^*{\ \ \mathbf{(\overline{CT6})}}_*{I_x \cdot h}	\ar[dr]^*{x \cdot K_{x^{-1}}}			& 							 \\
														&		xhx^{-1}	\ar[d]^*{K_x \cdot x^{-1}}		\\
														&		hxx^{-1}	\ar[dl]^*{h \cdot I_x}		\\
			h											&							}
\end{array}
$$

$$
\xymatrix{						&		\ar[ddl]_*{\overline{C}_{w_1,\epsilon_1,\delta_1}\cdot w_2a^{\epsilon_2}b^{\delta_2} \ \ \ \ }	 	 hw_1a^{\epsilon_1}b^{\delta_1}w_2a^{\epsilon_2}b^{\delta_2}	\ar[ddr]^*{\ \ \ \overline{C}_{w_1a^{\epsilon_1}b^{\delta_1}w_2,\epsilon_2,\delta_2}}		 &							\\
				&				&				\\
				hw_1b^{\delta_1}a^{\epsilon_1}w_2a^{\epsilon_2}b^{\delta_2}	 \ar[ddr]_*{\overline{C}_{w_1b^{\delta_1}a^{\epsilon_1}w_2,\epsilon_2,\delta_2}}	&					\mathbf{(\overline{CT7})}						&		 hw_1a^{\epsilon_1}b^{\delta_1}w_2b^{\delta_2}a^{\epsilon_2}	\ar[ddl]^*{\ \ \ \ \ \ \ \overline{C}_{w_1,\epsilon_1,\delta_1} \cdot w_2b^{\delta_2}a^{\epsilon_2}}		\\
				&				&				\\
								&						hw_1b^{\delta_1}a^{\epsilon_1}w_2b^{\delta_2}a^{\epsilon_2}					&					 }
$$
\end{small}
\caption{A set $\overline{\mathcal{C}} = \{ \mathrm{(\overline{CT1})}$--$\mathrm{(\overline{CT7})} \} $ of critical circuits in $\overline{\Gamma}$ given by
resolving critical peaks. Here $x \in A$, $w, w_1, w_2 \in A^*$ and $\epsilon, \epsilon_1, \epsilon_2, \delta, \delta_1, \delta_2 \in \{ +1, -1 \}$.
A corresponding set $\mathcal{C} = \{ \mathrm{({CT1})}$--$\mathrm{({CT7})} \} $ of closed paths in $\Gamma$ is obtained by replacing each occurrence of an edge of the form $\overline{C}_{w,\epsilon,\delta}$ by the path ${C}_{w,\epsilon,\delta}$ defined in \eqref{eqn_C}.
}
\label{fig_criticalcircuits}
\end{figure}

\subsection*{\boldmath Mapping into the integral monoid ring $\ZM$} Now define $\Phi: P(\Gamma) \rightarrow \ZM$ to be the unique map which extends:
\begin{itemize}
\item $\Phi(\alpha \cdot K_a \cdot \beta) = \overline{\beta}$;
\item $\Phi(\alpha \cdot K_{a^{-1}} \cdot \beta) = -\overline{\beta}$;
\item $\Phi(\alpha \cdot E \cdot \beta) = 0$ for every rewrite rule $E \in Q$ with $E \neq K_a, K_{a^{-1}}$,
\end{itemize}
to paths  in such a way that
\[
\Phi(\bbp \circ \bbq) = \Phi(\bbp) + \Phi(\bbq) 
\;\; \mbox{and} \;\;
\Phi(\bbp^{-1}) = - \Phi(\bbp).
\]
The following basic properties of $\Phi$ are then easily verified for all paths $\bbp, \bbq \in P(\Gamma)$ and words $\alpha,\beta \in B^*$:
\begin{enumerate}[(i)]
\item $\Phi(\alpha \cdot \bbp \cdot \beta) = \Phi(\bbp) \cdot \overline{\beta}$
\item $\Phi(\bbp \circ \bbp^{-1}) = 0$
\item $\Phi([\bbp,\bbq]) = 0$ where
\[
[\bbp,\bbq] =
(\bbp \cdot \iota \bbq)
\circ
(\tau \bbp \cdot \bbq)
\circ
(\bbp^{-1} \cdot \tau \bbq)
\circ
(\iota \bbp \cdot \bbq^{-1}).
\]
\item If $\bbp \sim_0 \bbq$ then $\Phi(\bbp) = \Phi(\bbq)$.
\end{enumerate}
Here, (iv) follows from (ii) and (iii). Note that (iv) implies that $\Phi$ induces a well-defined map on the homotopy classes of paths of $\Gamma$.

In what follows we shall often omit bars from the top of words in the images under $\Phi$ and simply write words from $B^*$ with the obvious intended meaning.

\subsection*{\boldmath Computing images $\Phi(C)$ for $C \in \mathcal{C}$}
Now consider the effect of applying the mapping $\Phi$ to the closed paths from $\mathcal{C}$, where we take the convention that each path is read clockwise. Define a mapping $\partial:A^* \rightarrow \ZG$ by setting $\partial w = 0$ when $w = 1$, 
\[
\partial w =
\begin{cases}
-1 & \mbox{if} \ w=a \\
1 & \mbox{if} \ w=a^{-1} \\
0 & \mbox{if} \ w \in \{ b, b^{-1} \},
\end{cases}
\]
and when $|w|>1$ define inductively 
\[
\partial w = (\partial x) w' + \partial w'
\]
where $w \equiv xw'$ with $x \in A$ and $w' \in A^+$. 
Note that for all $x \in A$ and $\epsilon = \pm 1$ we have $\partial x^{-1} = - \partial x$ and $\partial a^{\epsilon} = -\epsilon$.
Using the map $\partial$ we may readily deduce the following equations 
\begin{enumerate}
\item[(i)] 
$\Phi(K_x) = - \partial x $ 
\item[(ii)] 
$\Phi(C_{w,\epsilon,\delta}) = -(\partial w) (b^\delta a^\epsilon - a^\epsilon b^\delta)$
\item[(iii)]
$\Phi(C_{xw,\epsilon,\delta}) = \Phi(C_{w,\epsilon,\delta}) - (\partial x) w (b^\delta a^\epsilon - a^\epsilon b^\delta)$
\item[(iv)] 
$\Phi(C_{w_1 w_2,\epsilon,\delta}) = \Phi(C_{w_2,\epsilon,\delta}) - (\partial w_1) w_2 (b^\delta a^\epsilon - a^\epsilon b^\delta)$
\end{enumerate}
for any $x \in A$, $w, w_1, w_2 \in A^*$ and $\epsilon, \delta \in \{ -1, +1 \}$. Routine calculations using these equations then yield the
results of applying $\Phi$ to each of the critical circuits from $\mathcal{C}$. The results of these computations are given in the table in Figure~\ref{fig_table}.
\begin{figure}
\renewcommand{\tabcolsep}{6pt}
\begin{tabular}{ |  c | l | }
\hline
Circuit type of $\bbp$ & \quad \quad \quad  \quad \quad \quad  \quad \quad  $\Phi(\bbp)$  \
\\ \hline \hline
  (CT1)  &
	$
	\begin{cases}
	e(a^{-e} - 1)w_2(b^\delta a^\epsilon - a^\epsilon b^\delta) &  \mbox{if} \  x = a^e \\
  0 & \mbox{if} \  x \in \{ b,b^{-1} \} 		
	\end{cases}
	$
	\\ \hline 
  (CT2)  & 	
	$
	0
	$
	\\ 
  \hline
  (CT3)  & 	
	$
  0
	$
	\\ 
  \hline
  (CT4)  & 	
	$
	-\epsilon (b^\delta a^\epsilon - a^\epsilon b^\delta)	
	$
	\\ 
  \hline
  (CT5)  & 	
	$
  0
	$
	\\ 
  \hline
  (CT6)  &
	$
	\begin{cases}
	e (a^{-e} - 1) &  \mbox{if} \  x = a^e \\
	0 & \mbox{if} \  x \in \{ b,b^{-1} \}
	\end{cases}
	$
	\\ 
  \hline
  (CT7)  &
  $\epsilon_1 (b^{\delta_1} - 1)w_2(b^{\delta_2} a^{\epsilon_2} - a^{\epsilon_2} b^{\delta_2})$
	\\ \hline 
\end{tabular}
\caption{The images under $\Phi$ of the critical circuits from $\mathcal{C}$.}
\label{fig_table}
\end{figure}
Observe that $\Phi(\mathcal{C})$ is a subset of $\ZG$ where $G$ is the free group $F(a,b)$ over $\{ a,b \}$.

For a subset $X$ of a right $\ZG$-module $\ZG$ we use $\lb X \rb_{\ZG}$ to denote the submodule generated by $X$. 
\begin{lem}
\label{lem_passtoZG}
If $M$ has $\mathrm{FDT}$ then the submodule $\lb \Phi(\mathcal{C}) \rb_\ZG$, of the right $\ZG$-module $\ZG$,
generated by $\Phi(\mathcal{C})$
is a finitely generated right $\ZG$-module.
\end{lem}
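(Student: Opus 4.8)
The plan is to upgrade $\mathrm{FDT}$ into the statement that $\mathcal{C}$ possesses a \emph{finite} subset $\mathcal{C}_0$ with $\lb\Phi(\mathcal{C})\rb_\ZG=\lb\Phi(\mathcal{C}_0)\rb_\ZG$, after which finite generation is immediate. So suppose $M$ has $\mathrm{FDT}$. Since $\lb B\mid Q\rb$ is a finite presentation, $\Gamma$ admits a finite homotopy base; applying Lemma~\ref{lem_homgen} to each of its (finitely many) circuits, expressed through the homotopy base $\mathcal{C}\cup\mathcal{Z}$, and collecting the finitely many elements of $\mathcal{C}\cup\mathcal{Z}$ that occur, one obtains a \emph{finite} subset $\mathcal{C}_0\cup\mathcal{Z}_0$ of $\mathcal{C}\cup\mathcal{Z}$ (with $\mathcal{C}_0\subseteq\mathcal{C}$ and $\mathcal{Z}_0\subseteq\mathcal{Z}$) which is itself a homotopy base for $\Gamma$; this is the familiar fact that under $\mathrm{FDT}$ every homotopy base contains a finite one, and it is the only place $\mathrm{FDT}$ is used.

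Two elementary observations then do the work. First, every circuit of $\mathcal{C}$ runs only through words containing at most one occurrence of the letter $h$ (read this off from Figure~\ref{fig_criticalcircuits} and from \eqref{eqn_C}), whereas every circuit of $\mathcal{Z}$ lies in $\Gamma_Z$ and so runs only through words with at least two occurrences of $h$. Second, among the rules of $Q$ only $Z_h$ alters the number of occurrences of $h$ in a word, and it requires the subword $h^3$; hence no edge of $\Gamma$ joins a word with $\le 1$ occurrence of $h$ to one with $\ge 2$, so the words with $\le 1$ occurrence of $h$ form a union of connected components of $\Gamma$ that is disjoint from $\Gamma_Z$. Now fix $C\in\mathcal{C}$ and apply Lemma~\ref{lem_homgen} with the homotopy base $\mathcal{C}_0\cup\mathcal{Z}_0$, obtaining $v_i,w_i\in B^*$, $\bbp_i\in P(\Gamma)$ and $\bbq_i\in(\mathcal{C}_0\cup\mathcal{Z}_0)\cup(\mathcal{C}_0\cup\mathcal{Z}_0)^{-1}$ with
\[
C\;\sim_0\;\bbp_1^{-1}\circ(v_1\cdot\bbq_1\cdot w_1)\circ\bbp_1\circ\cdots\circ\bbp_n^{-1}\circ(v_n\cdot\bbq_n\cdot w_n)\circ\bbp_n .
\]
The right-hand side is a closed path at $\iota C$, and for each $i$ the vertex $v_i(\iota\bbq_i)w_i=\iota\bbp_i$ is joined to $\iota C$ by $\bbp_i$; since $\iota C$ has $\le 1$ occurrence of $h$, so does $v_i(\iota\bbq_i)w_i$ by the second observation, and since $\iota\bbq_i$ would have $\ge 2$ occurrences of $h$ were $\bbq_i$ in $\mathcal{Z}_0\cup\mathcal{Z}_0^{-1}$, this forces $\bbq_i\in\mathcal{C}_0\cup\mathcal{C}_0^{-1}$ for every $i$.

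It remains to apply $\Phi$. Using the additivity of $\Phi$, the identity $\Phi(\bbp^{-1})=-\Phi(\bbp)$, the rule $\Phi(\alpha\cdot\bbp\cdot\beta)=\Phi(\bbp)\cdot\overline\beta$, and the invariance of $\Phi$ under $\sim_0$, the displayed relation yields $\Phi(C)=\sum_{i=1}^{n}\Phi(\bbq_i)\cdot\overline{w_i}$ with every $\Phi(\bbq_i)\in\pm\Phi(\mathcal{C}_0)\subseteq\ZG$. Because multiplying a non-unit of $M$ on the left by a unit again gives a non-unit, each summand with $\overline{w_i}\notin G$ lies in $\mathbb{Z}(M\setminus G)$; applying the projection $\ZM=\ZG\oplus\mathbb{Z}(M\setminus G)\to\ZG$, which fixes $\Phi(C)\in\ZG$ and kills those summands, we get $\Phi(C)=\sum_{i:\,\overline{w_i}\in G}\Phi(\bbq_i)\cdot\overline{w_i}\in\lb\Phi(\mathcal{C}_0)\rb_\ZG$. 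Hence $\lb\Phi(\mathcal{C})\rb_\ZG\subseteq\lb\Phi(\mathcal{C}_0)\rb_\ZG$, the reverse inclusion is trivial, and so $\lb\Phi(\mathcal{C})\rb_\ZG=\lb\Phi(\mathcal{C}_0)\rb_\ZG$ is a finitely generated right $\ZG$-module. The point that needs care is precisely that one must \emph{produce} a finitely generated module, not merely bound $\lb\Phi(\mathcal{C})\rb_\ZG$ inside a finitely generated module attached to some finite homotopy base --- worthless on its own, since $\ZG=\mathbb{Z}F(a,b)$ is far from Noetherian --- and this is exactly why one passes to a finite sub-homotopy-base $\mathcal{C}_0\cup\mathcal{Z}_0$ and exploits that the circuits of $\mathcal{C}$ avoid the zero-component $\Gamma_Z$, so that the auxiliary generators $\mathcal{Z}_0$ never intervene when the circuits of $\mathcal{C}$ are re-expressed.
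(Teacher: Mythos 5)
Your proof is correct and follows essentially the same route as the paper's: extract a finite sub-homotopy-base $\mathcal{C}_0\cup\mathcal{Z}_0$ from $\mathcal{C}\cup\mathcal{Z}$ using $\mathrm{FDT}$, decompose an arbitrary $C\in\mathcal{C}$ via Lemma~\ref{lem_homgen}, rule out the $\mathcal{Z}_0$-pieces by counting occurrences of $h$ along the connected component of $\iota C$, and apply $\Phi$ to land in $\lb\Phi(\mathcal{C}_0)\rb_\ZG$. The only (harmless) divergence is at the last step: where the paper checks that every $\bbq_j$ with $\Phi(\bbq_j)\neq 0$ forces $\beta_j\in A^*$ (using that $\Phi$ vanishes on (CT2), (CT3), (CT5)), you instead discard the summands with $\overline{w_i}\notin G$ via the abelian-group projection $\ZM=\ZG\oplus\mathbb{Z}(M\setminus G)\to\ZG$, a slightly cleaner piece of bookkeeping that reaches the same conclusion.
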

\begin{proof}
Since $\lb B | Q \rb$ has $\mathrm{FDT}$, and $\mathcal{C} \cup \mathcal{Z}$ is a homotopy base for its derivation graph $\Gamma$, it follows that there are finite subsets $\mathcal{C}_0 \subseteq \mathcal{C}$ and $\mathcal{Z}_0 \subseteq \mathcal{Z}$ such that $\sim_{\mathcal{C}_0 \cup \mathcal{Z}_0}$ is a finite homotopy base for $\Gamma$. Let $C \in \mathcal{C}$ be arbitrary. We claim that $\Phi(C) \in \lb \Phi(\mathcal{C}_0) \rb_\ZG$. Once established, this will prove the lemma, since $\Phi(\mathcal{C}_0)$ is a finite subset of $\lb \Phi(\mathcal{C}) \rb_\ZG$.

By Lemma~\ref{lem_homgen}, since $C$ is a closed path in $\Gamma$ and 
$\sim_{\mathcal{C}_0 \cup \mathcal{Z}_0}$ is a homotopy base for $\Gamma$,
we can write
\begin{align}
\label{eqn_pathdecomp}
C
\sim_0
\bbp_1^{-1} \circ (\alpha_1 \cdot \bbq_1 \cdot \beta_1) \circ \bbp_1
\circ \cdots \circ
\bbp_n^{-1} \circ (\alpha_n \cdot  \bbq_n \cdot  \beta_n) \circ \bbp_n,
\end{align}
where each $\bbp_i \in P(\Gamma)$, $\alpha_i, \beta_i \in B^*$ and $\bbq_i \in \mathcal{C}_0 \cup \mathcal{Z}_0$. Since $C \in \mathcal{C}$ it follows that $C$ is a closed path in $\Gamma$ contained in some connected component $\mathcal{D}$ of $\Gamma$ that is disjoint from $\Gamma_Z$ (since $\iota C$ does not contain more than one letter $h$). Therefore, since the path on the right hand side of \eqref{eqn_pathdecomp} is $\sim_0$-homotopic to $C$ in $\Gamma$ it follows that this path too is contained in the connected component $\mathcal{D}$ of $\Gamma$ where $\mathcal{D} \neq \Gamma_Z$. In particular, this implies that
for $1 \leq j \leq n$ the word $\iota (\alpha_j \cdot  \bbq_j \cdot  \beta_j)$ has at most one occurrence of the letter $h$, and therefore $\bbq_j \in \mathcal{C}_0$ and moreover by inspection of the circuits (CT1)--(CT7) we see that, whenever $\bbq_j$ is not of the form (CT2), we must have $\beta_j \in A^*$ (since otherwise the word $\iota (\alpha_j \cdot  \bbq_j \cdot  \beta_j)$ would have strictly more than one occurrence of the letter $h$).

Applying $\Phi$ to \eqref{eqn_pathdecomp} then gives
\begin{align}
\label{eqn_PHIpathdecomp}
\Phi(C)
=
\Phi(\bbq_1)\beta_1 + \ldots + \Phi(\bbq_n)\beta_n.
\end{align}
Now for $1 \leq j \leq n$, if $\bbq_j$ has any of the forms (CT2), (CT3) or (CT5) then $\Phi(\bbq_j)=0$. Hence the non-zero terms in the sum \eqref{eqn_PHIpathdecomp} are made up entirely of images of paths from (CT1), (CT4), (CT6) and (CT7). Therefore, from the observation in the previous paragraph, it follows that whenever $\Phi(\bbq_j) \neq 0$ we have $\beta_j \in A^*$. Along with the fact that every $\bbq_j \in \mathcal{C}_0$ this shows
\begin{align*}
\Phi(C)
=
\Phi(\bbq_1)\beta_1 + \ldots + \Phi(\bbq_n)\beta_n \in \lb \Phi(\mathcal{C}_0) \rb_\ZG,
\end{align*}
as claimed, completing the proof of the lemma.
\end{proof}

The next lemma describes the submodule of the right $\ZG$-module $\ZG$ generated by $\Phi(\mathcal{C})$.

\begin{lem}
\label{lem_equivalent}
Let
\[
X = \{ (1-a) \} \cup
\{
(1 - w a^\epsilon b^\delta a^{-\epsilon} b^{-\delta} w^{-1}):
w \in G
\}  \subseteq \ZG.
\]
Then
\[
\lb \Phi(\mathcal{C}) \rb_\ZG = \lb X \rb_\ZG.
\]
\end{lem}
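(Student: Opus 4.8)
The plan is to prove the equality by two inclusions, in each case reading off the list of images $\Phi(C)$ ($C\in\mathcal{C}$) recorded in Figure~\ref{fig_table}. Three elementary identities in $\ZG$ will carry essentially all of the argument. First, for $\epsilon,\delta\in\{+1,-1\}$ one has
\[
b^\delta a^\epsilon-a^\epsilon b^\delta=\bigl(1-a^\epsilon b^\delta a^{-\epsilon}b^{-\delta}\bigr)\,b^\delta a^\epsilon,
\]
so every ``commutator difference'' factor $b^\delta a^\epsilon-a^\epsilon b^\delta$ occurring in the table is a right $\ZG$-multiple of one of the generators $1-a^\epsilon b^\delta a^{-\epsilon}b^{-\delta}\in X$. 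Second, $e(a^{-e}-1)=(1-a)u$ with $u=1$ if $e=-1$ and $u=a^{-1}$ if $e=+1$, so the factor $e(a^{-e}-1)$ appearing in rows (CT1) and (CT6) is always a right $\ZG$-multiple of $1-a\in X$. Third, for $g\in G$ and any $c\in G$ we have $g(1-c)=(1-gcg^{-1})g$, hence for every $\xi=\sum_g n_g g\in\ZG$ one has $\xi(1-c)=\sum_g n_g(1-gcg^{-1})g$; when $c=a^\epsilon b^\delta a^{-\epsilon}b^{-\delta}$ each summand lies in $\lb X\rb_\ZG$, so $\xi(1-c)\in\lb X\rb_\ZG$ for all $\xi$.

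For the inclusion $\lb\Phi(\mathcal{C})\rb_\ZG\subseteq\lb X\rb_\ZG$ I would simply run through Figure~\ref{fig_table}. Rows (CT2), (CT3), (CT5) give $0$. For (CT6) the image is $e(a^{-e}-1)$ or $0$, hence in $(1-a)\ZG\subseteq\lb X\rb_\ZG$. For (CT1) the image is $e(a^{-e}-1)\,w_2\,(b^\delta a^\epsilon-a^\epsilon b^\delta)$ or $0$, which by the first two identities is $(1-a)$ times an element of $\ZG$, hence again in $(1-a)\ZG$. For (CT4) the first identity gives $-\epsilon\,(1-a^\epsilon b^\delta a^{-\epsilon}b^{-\delta})\,b^\delta a^\epsilon\in\lb X\rb_\ZG$. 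For (CT7) the first identity rewrites the image as $\xi(1-c)\,b^{\delta_2}a^{\epsilon_2}$ with $\xi=\epsilon_1(b^{\delta_1}-1)w_2\in\ZG$ and $c=a^{\epsilon_2}b^{\delta_2}a^{-\epsilon_2}b^{-\delta_2}$, and the third identity puts $\xi(1-c)$, and hence the whole image, in $\lb X\rb_\ZG$.

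For the reverse inclusion $\lb X\rb_\ZG\subseteq\lb\Phi(\mathcal{C})\rb_\ZG$ I must realise each generator of $X$ inside $\lb\Phi(\mathcal{C})\rb_\ZG$. The element $1-a$ is literally the image of the (CT6) circuit with $x=a^{-1}$, and from (CT4), after right-multiplying by the unit $-\epsilon\,(b^\delta a^\epsilon)^{-1}$ and using the first identity, one obtains $1-a^\epsilon b^\delta a^{-\epsilon}b^{-\delta}\in\lb\Phi(\mathcal{C})\rb_\ZG$ for all $\epsilon,\delta$; these are the $w=1$ generators of $X$. The real work is to produce the conjugates $1-wcw^{-1}$ ($c=a^\epsilon b^\delta a^{-\epsilon}b^{-\delta}$) for arbitrary $w\in G$, and since $\lb\Phi(\mathcal{C})\rb_\ZG$ is only a right module conjugation is not available for free. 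The mechanism is: (a) $1-wcw^{-1}=w(1-c)w^{-1}=\bigl[(w-1)(1-c)+(1-c)\bigr]w^{-1}$, so it suffices to show $(w-1)(1-c)\in\lb\Phi(\mathcal{C})\rb_\ZG$; (b) rows (CT1) (with $x=a^{\pm1}$) and (CT7) (with $\delta_1=\pm1$), after right-multiplying by the unit $(b^\delta a^\epsilon)^{-1}$ and using the first two identities, give $(a-1)\,g\,(1-c)$ and $(b-1)\,g\,(1-c)$ in $\lb\Phi(\mathcal{C})\rb_\ZG$ for every $g\in G$ and every such $c$; (c) the augmentation ideal of $\ZG$ is generated as a right ideal by $\{a-1,\,b-1\}$ ($\ZG$ being the group ring of a free group, this is Fox's theorem), so writing $w-1=(a-1)\xi_a+(b-1)\xi_b$ and expanding $\xi_a,\xi_b$ over $G$ gives $(w-1)(1-c)\in\lb\Phi(\mathcal{C})\rb_\ZG$ termwise via (b). Hence $X\subseteq\lb\Phi(\mathcal{C})\rb_\ZG$ and the two inclusions combine to the stated equality.

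The only genuinely delicate step is (c)/(a): recovering all the conjugated commutators $1-wcw^{-1}$ from the right $\ZG$-module generated by the finitely-patterned images in Figure~\ref{fig_table}, which is exactly where one must exploit that $\omega(\ZG)$ is right-generated by $\{a-1,b-1\}$ together with the identity $w(1-c)w^{-1}=(w-1)(1-c)+(1-c)$ times $w^{-1}$. Everything else amounts to matching each row of the table against one of the three model shapes, which I expect to be routine but a little tedious.
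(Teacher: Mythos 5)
Your proposal is correct and, in substance, is the paper's own argument: the inclusion $\Phi(\mathcal{C})\subseteq\lb X\rb_\ZG$ is the same inspection of the table in Figure~\ref{fig_table} (which the paper leaves to the reader), while your steps (b)--(c) — extracting $(a-1)g(1-c)$ and $(b-1)g(1-c)$ from the (CT1) and (CT7) images and then decomposing $w-1$ in the augmentation ideal — merely repackage the paper's induction on the length of the reduced word $w$, since the fact that the augmentation ideal is generated as a right ideal by $\{a-1,b-1\}$ (true for any group and any generating set; no freeness or Fox calculus is needed) is proved by exactly that letter-peeling induction. The only cosmetic difference is that the paper works directly with the elements $w(b^\delta a^\epsilon-a^\epsilon b^\delta)$, which differ from your $1-wcw^{-1}$ by right multiplication by the unit $wb^\delta a^\epsilon$, so the conjugation identity in your step (a) is not needed there.
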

\begin{proof}
First we show $X \subseteq \lb \Phi(\mathcal{C}) \rb_\ZG$. That $(1-a) \in \lb \Phi(\mathcal{C}) \rb_\ZG$ follows immediately from consideration of the images of the paths (CT6) under $\Phi$. Next we prove by induction on the length of the reduced word $\overline{w}$ in the free group $G$, that for all $\epsilon, \delta \in \{+1,-1\}$ we have
\[
(1- wa^\epsilon b^\delta a^{-\epsilon} b^{-\delta} w^{-1}) \in \lb \Phi(\mathcal{C}) \rb_\ZG.
\]
The base case $w=1 \in G$ follows by consideration of the images of the paths (CT4) under $\Phi$.
For the induction step, suppose that $w$ is in reduced form. First suppose that $w \equiv a^e w'$ where $e \in \{+1,-1\}$. Then considering the $\Phi$-images of paths (CT1) we see that
\[
ea^{-e}w(b^\delta a^\epsilon - a^\epsilon b^\delta)
-ew(b^\delta a^\epsilon - a^\epsilon b^\delta)
=
e(a^{-e} - 1)w(b^\delta a^\epsilon - a^\epsilon b^\delta) 
\in \lb \Phi(\mathcal{C}) \rb_\ZG.
\]
But by induction
\[
ea^{-e}w(b^\delta a^\epsilon - a^\epsilon b^\delta)
=
e w' (b^\delta a^\epsilon - a^\epsilon b^\delta) \in \lb \Phi(\mathcal{C}) \rb_\ZG
\]
and so it follows that
\[
-ew(b^\delta a^\epsilon - a^\epsilon b^\delta) \in \lb \Phi(\mathcal{C}) \rb_\ZG,
\]
completing the induction step in this case. The other possibility is that $w \equiv b^e w'$ where $e \in \{+1,-1\}$. The argument in this case is similar. Considering $\Phi$-images of paths (CT7) we see that
\[
b^ew' (b^\delta a^\epsilon - a^\epsilon b^\delta)
-w' (b^\delta a^\epsilon - a^\epsilon b^\delta)
=
(b^e-1)w' (b^\delta a^\epsilon - a^\epsilon b^\delta) 
\in \lb \Phi(\mathcal{C}) \rb_\ZG
\]
and the result then follows since
\[
w'(b^\delta a^\epsilon - a^\epsilon b^\delta) \in \lb \Phi(\mathcal{C}) \rb_\ZG
\]
by induction, and thus
\[
w(b^\delta a^\epsilon - a^\epsilon b^\delta) = b^e w'(b^\delta a^\epsilon - a^\epsilon b^\delta)
\in \lb \Phi(\mathcal{C}) \rb_\ZG.
\]
Conversely, the fact that $\Phi(\mathcal{C}) \subseteq \lb X \rb_\ZG$ follows easily from inspection of the images of (CT1)--(CT7) under $\Phi$, listed in the table in Figure~\ref{fig_table}.  \end{proof}

Therefore to complete the proof of Theorem~\ref{thm_themainresult} it will suffice to show that $\lb X \rb_\ZG$ is \emph{not} finitely generated as a right $\ZG$-module. For this we make use of the following general result. 
\begin{lem}
\label{lem_wellknown}
Let $G$ be a group, let $A$ be a subset of $G$ and let
\[
(1-A) = \{ (1-a) : a \in A  \} \subseteq \ZG.
\]
Then for all $g \in G$, if $(1-g) \in \lb (1-A) \rb_\ZG$ then $g \in \lb A \rb$ in $G$.
In particular, if $\lb (1-A) \rb_\ZG$ is a finitely generated right $\ZG$-module then $\lb A \rb$ is a finitely generated group.
\end{lem}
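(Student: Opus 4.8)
The plan is to use the fundamental augmentation-ideal relation: if $\mathbb{Z}G$ denotes the integral group ring of $G$ and $\varepsilon : \mathbb{Z}G \to \mathbb{Z}$ is the augmentation map (sending every group element to $1$), then for a subgroup $K \leq G$ the left (or right) ideal $\lb (1-k) : k \in K \rb_{\mathbb{Z}G}$ is exactly the kernel of the composite $\mathbb{Z}G \to \mathbb{Z}[G/K]$, and in particular $(1-g)$ lies in this ideal precisely when $g \in K$. Here $A$ need not be a subgroup, so the first step is to reduce to that case: let $K = \lb A \rb$ be the subgroup of $G$ generated by $A$, and observe that $\lb (1-A) \rb_{\mathbb{Z}G} \subseteq \lb (1-K) \rb_{\mathbb{Z}G}$ trivially, while the reverse containment holds because for $k = a_1^{\eta_1} \cdots a_m^{\eta_m}$ with $a_i \in A$, $\eta_i = \pm 1$, one has the telescoping identity $1 - k = \sum_{i=1}^{m} (a_1^{\eta_1}\cdots a_{i-1}^{\eta_{i-1}})(1 - a_i^{\eta_i})$ together with $1 - a^{-1} = -a^{-1}(1-a)$; thus every generator $1-k$ of the larger ideal already lies in $\lb (1-A)\rb_{\mathbb{Z}G}$, giving $\lb (1-A) \rb_{\mathbb{Z}G} = \lb (1-K) \rb_{\mathbb{Z}G}$.

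Next I would identify $\lb (1-K) \rb_{\mathbb{Z}G}$ as a kernel. Consider the right $\mathbb{Z}G$-module homomorphism $\pi : \mathbb{Z}G \to \mathbb{Z}[K \backslash G]$ induced by the projection $G \to K\backslash G$ onto right cosets, where $\mathbb{Z}[K\backslash G]$ is the free abelian group on the set of right cosets, made a right $\mathbb{Z}G$-module via the natural right action. Clearly each $1-k$ is killed by $\pi$, so $\lb (1-K)\rb_{\mathbb{Z}G} \subseteq \ker \pi$. For the reverse inclusion, note that $\ker\pi$ is spanned over $\mathbb{Z}$ by elements $g' - g$ with $g', g$ in the same coset, i.e. $g' = kg$ for some $k \in K$; and $kg - g = (k-1)g = -(1-k)g \in \lb (1-K)\rb_{\mathbb{Z}G}$. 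Hence $\ker\pi = \lb (1-K)\rb_{\mathbb{Z}G}$. Now if $(1-g) \in \lb (1-A)\rb_{\mathbb{Z}G} = \ker\pi$, then $\pi(1-g) = 0$ forces the trivial coset $K$ and the coset $Kg$ to be equal in $\mathbb{Z}[K\backslash G]$, whence $g \in K = \lb A \rb$, which is the first assertion.

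For the "in particular" clause, suppose $\lb (1-A)\rb_{\mathbb{Z}G}$ is finitely generated as a right $\mathbb{Z}G$-module, say by $\xi_1, \dots, \xi_r$. Each $\xi_i$ is a finite $\mathbb{Z}G$-combination of elements $1-a$, $a \in A$, so only finitely many elements $a_1, \dots, a_s \in A$ occur across all the $\xi_i$; put $K_0 = \lb a_1, \dots, a_s \rb \leq K$. Then $\xi_1, \dots, \xi_r \in \lb (1-\{a_1,\dots,a_s\})\rb_{\mathbb{Z}G}$, so $\lb (1-A)\rb_{\mathbb{Z}G} = \lb (1-\{a_1,\dots,a_s\})\rb_{\mathbb{Z}G}$. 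In particular, for every $a \in A$ we have $(1-a) \in \lb (1-\{a_1,\dots,a_s\})\rb_{\mathbb{Z}G}$, so by the first part $a \in \lb a_1, \dots, a_s\rb = K_0$; hence $K = \lb A \rb \leq K_0 \leq K$, so $K = K_0$ is finitely generated. The only real point to be careful about is the coset bookkeeping in identifying $\ker\pi$ with the ideal — everything else is a formal manipulation in the group ring — so that is where I would spend the most care, though it is entirely standard.
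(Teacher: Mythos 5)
Your argument is correct and is essentially the paper's: both work in the permutation module on the right cosets of $K=\lb A\rb$ (your homomorphism $\pi$ is exactly evaluation $\lambda\mapsto x_K\cdot\lambda$ at the basepoint coset used in the paper), and the finite-generation clause is handled identically. The only slip is one of handedness in your (in fact inessential) reduction $\lb (1-K)\rb_{\ZG}=\lb (1-A)\rb_{\ZG}$: since these are \emph{right} $\ZG$-submodules you need the right-handed telescoping identity $1-a_1\cdots a_m=\sum_{i}(1-a_i)a_{i+1}\cdots a_m$ together with $1-a^{-1}=-(1-a)a^{-1}$, not the left-handed versions you wrote; in any case the lemma itself only requires $\lb (1-A)\rb_{\ZG}\subseteq\ker\pi$, which is immediate since each $a\in A$ lies in $K$ and $\ker\pi$ is a right submodule.
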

\begin{proof}
This result is almost certainly well known; see for instance \cite[Section~3, Exercise~2]{Brown}. We include a proof here for the sake of completeness.

Let $H = \lb A \rb$ be the subgroup of $G$ generated by $A$. Let $X = \{ Hg : g \in G \}$ be the set of right cosets of $H$ in $G$. Of course, $G$ acts on $X$ on the right via
\[
(H g_1) \cdot g_2 = H g_1 g_2.
\]
Let $\mathbb{Z}X = \oplus_{x \in X} \mathbb{Z} x$ denote the free abelian group with basis $X$. Define an action of $\ZG$ on $\mathbb{Z}X$ by
\[
(x_1 + \cdots + x_k) \cdot
(g_1 + \cdots + g_r)
=
\sum_{1 \leq i \leq k \atop 1 \leq j \leq r} x_i \cdot g_j \in \mathbb{Z}X.
\]
It is easy to see that with respect to this action
$\mathbb{Z}X$ is a right $\ZG$-module. 
Let $x_H = H1 \in X$.
 Now let $g \in G$ with $(1-g) \in \lb (1-A) \rb_\ZG$.
 This means we can write
 \begin{equation}\label{eq_star}
 (1-a_1) \lambda_1 + \cdots + (1-a_t) \lambda_t = 1-g
 \end{equation}
 where each $a_i \in A$ and $\lambda_i \in \ZG$. But for every $h \in H$ we have
 \[
 x_H \cdot (1-h) = x_H - x_H = 0.
 \]
 Since $A \subseteq H = \lb A \rb$, from \eqref{eq_star} we conclude
 \[
 x_H \cdot (1-g) =
 x_H \cdot ( \; (1-a_1) \lambda_1 + \cdots + (1-a_t) \lambda_t \;) =
 0 + \cdots + 0 = 0.
 \]
 It follows that $x_H \cdot g = x_H$ so $Hg = H$ which implies $g \in H = \lb A \rb$.

 For the last clause, if $\lb (1-A) \rb_\ZG$ is a finitely generated right $\ZG$-module then there is a finite subset $A'$ of $A$ such that $(1-A) \subseteq \lb (1-A') \rb_\ZG$ which in turn from above implies that $A \subseteq \lb A' \rb$ and so $A'$ is a finite generating set for $\lb A \rb$.
\end{proof}
\begin{cor}
\label{cor_HN}
Let $H = \lb a \rb$, the cyclic subgroup of the free group $G = F(a,b)$ generated by $a$, and let
\[
N =
\lb \
[b^\delta, a^\epsilon]^w : w \in G, \epsilon, \delta \in \{ +1, -1 \}
\ \rb,
\]
where $[x,y]$ denotes the commutator $xyx^{-1}y^{-1}$, and $x^y = yxy^{-1}$. If $M$ has $\mathrm{FDT}$ then the subgroup $\lb H \cup N \rb = HN \leq G$ is finitely generated.
\end{cor}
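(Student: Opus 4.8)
The plan is to recognise the set $X$ from Lemma~\ref{lem_equivalent} as a set of the form $(1-A)$ for a suitable $A \subseteq G$, and then simply feed this into Lemma~\ref{lem_wellknown}. First I would observe that
\[
X = \{(1-a)\} \cup \{\, (1 - w a^\epsilon b^\delta a^{-\epsilon} b^{-\delta} w^{-1}) : w \in G,\ \epsilon,\delta \in \{+1,-1\}\,\} = (1 - A),
\]
where $A = \{a\} \cup \{\, w a^\epsilon b^\delta a^{-\epsilon} b^{-\delta} w^{-1} : w \in G,\ \epsilon,\delta \in \{+1,-1\}\,\}$. The next step is to identify $\lb A \rb$ with $HN$. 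For this note that $a^\epsilon b^\delta a^{-\epsilon} b^{-\delta} = [a^\epsilon, b^\delta] = [b^\delta, a^\epsilon]^{-1}$, so that each generator of $A$ other than $a$, namely $w a^\epsilon b^\delta a^{-\epsilon} b^{-\delta} w^{-1}$, is precisely the inverse of $[b^\delta,a^\epsilon]^w = w[b^\delta,a^\epsilon]w^{-1}$. Since a subgroup is generated by a set if and only if it is generated by the set of inverses of its elements, it follows that $\lb\, w a^\epsilon b^\delta a^{-\epsilon} b^{-\delta} w^{-1} : w \in G,\ \epsilon,\delta \,\rb = N$. As $N$ is generated by a conjugation-closed set it is normal in $G$, and hence $\lb A \rb = \lb a \rb N = HN$ (which is therefore a subgroup).

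With these identifications in place the corollary follows immediately. Suppose $M$ has $\mathrm{FDT}$. By Lemma~\ref{lem_passtoZG} the submodule $\lb \Phi(\mathcal{C}) \rb_\ZG$ of the right $\ZG$-module $\ZG$ is finitely generated, and by Lemma~\ref{lem_equivalent} we have $\lb \Phi(\mathcal{C}) \rb_\ZG = \lb X \rb_\ZG = \lb (1-A) \rb_\ZG$. Applying the final clause of Lemma~\ref{lem_wellknown} to the subset $A$ of $G$, we conclude that $\lb A \rb = HN$ is a finitely generated group, as required.

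I do not expect any genuine obstacle in this argument: the whole content is the bookkeeping identity $[a^\epsilon,b^\delta] = [b^\delta,a^\epsilon]^{-1}$, the normality of $N$ in $G$, and the three previously established lemmas. If desired one can add the remark that $N$ is in fact the entire commutator subgroup $[G,G]$, since $N$ is the normal closure in $G = F(a,b)$ of the single commutator $[a,b]$ and $G/[G,G]$ is the free abelian group of rank two; this explains the description of $N$ given in the outline, although it is not needed for the proof.
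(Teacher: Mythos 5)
Your proposal is correct and is essentially the paper's own argument: the paper likewise combines Lemmas~\ref{lem_passtoZG} and~\ref{lem_equivalent} with the final clause of Lemma~\ref{lem_wellknown}, merely stating without comment that $\lb \{a\}\cup\{wa^\epsilon b^\delta a^{-\epsilon}b^{-\delta}w^{-1}\}\rb = HN$, which you verify explicitly via $[a^\epsilon,b^\delta]=[b^\delta,a^\epsilon]^{-1}$ and the normality of $N$.
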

\begin{proof}
Suppose that $M$ has $\mathrm{FDT}$. Let
\[
X = \{ (1-a) \} \cup
\{
(1 - w a^\epsilon b^\delta a^{-\epsilon} b^{-\delta} w^{-1}):
w \in G
\}  \subseteq \ZG.
\]
Since $M$ has $\mathrm{FDT}$, by Lemmas~\ref{lem_passtoZG} and \ref{lem_equivalent} it follows that $\lb X \rb_\ZG$ is finitely generated as a right $\ZG$-module. It then follows from the last clause of Lemma~\ref{lem_wellknown} that
\[
HN = \lb H \cup N \rb =
\lb
\{ a \} \cup
\{ w a^\epsilon b^\delta a^{-\epsilon} b^{-\delta} w^{-1} : w \in G \}
\rb
\]
is a finitely generated group.
\end{proof}

Of course $N$ is nothing more than the commutator subgroup of the free group $G=F(a,b)$.

To complete the proof of Theorem~\ref{thm_themainresult} we
apply the following classical result from combinatorial group theory.
\begin{thm}[\cite{Magnus1}, Theorem~2.10]
\label{thm_MKS}
Let $F$ be a non-abelian free group of finite rank and let $K$ be a subgroup of $F$ with index $i$. If $i$ is infinite and $K$ contains a normal subgroup $L$ of $F$, with $L \neq 1$, then $K$ is \emph{not} finitely generated.
\end{thm}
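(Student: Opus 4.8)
The plan is to prove this classical fact geometrically, via covering spaces (equivalently, Stallings' subgroup graphs). Realise $F$ as $\pi_1(R,\ast)$, where $R$ is a rose with one petal for each element of a fixed finite free basis of $F$; then $R$ is a finite graph and every connected covering of $R$ is locally finite. Let $q\colon\Gamma\to R$ be the covering corresponding to $K$, with $\pi_1(\Gamma,\tilde\ast)=K$. Its vertex set may be identified with the set of right cosets $K\backslash F$ (with $\tilde\ast\leftrightarrow K$), so that the edge at the vertex $Kg$ labelled by a basis element $x$ runs to $Kgx$. Consequently, reading a reduced word $u$ from a vertex $Kg$ traces a non-backtracking path ending at the vertex $Kgu$, and this path is closed precisely when $gug^{-1}\in K$.

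First I would use the hypotheses to exhibit a vertex of $\Gamma$ with a large tree-like neighbourhood. Suppose, for contradiction, that $K$ is finitely generated. Then the Stallings core $\Delta\subseteq\Gamma$ (the smallest subgraph carrying $\pi_1$, equivalently $\Gamma$ minus its maximal hanging forest) is a finite graph, and $\Gamma$ is obtained from $\Delta$ by attaching trees at vertices of $\Delta$, each such tree being attached along a single vertex of $\Delta$. Since $[F:K]$ is infinite, $\Gamma$ has infinitely many vertices, so at least one of these attached trees, say $T$, is infinite. Choose $1\neq w\in L$, written as a reduced word, and (using that $T$ is infinite and locally finite, so has an infinite ray) pick a vertex $v\in T$ at distance greater than $|w|$ from $\Delta$. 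Every path of length at most $|w|$ starting at $v$ remains inside $T$, since it cannot reach the vertex at which $T$ is attached, which lies in $\Delta$; hence the ball of radius $|w|$ around $v$ is a subgraph of the tree $T$, and so is itself a tree.

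Now I would play the two relevant facts against one another. On the one hand, $v$ corresponds to some coset $Kg$, and since $L\trianglelefteq F$ and $L\le K$ we get $gwg^{-1}\in L\le K$; by the observation in the first paragraph, reading $w$ from $v$ therefore gives a \emph{closed} path at $v$. On the other hand, that path is non-backtracking (because $w$ is reduced), has positive length $|w|\ge 1$, and lies inside the tree-neighbourhood of $v$; but in a tree the only non-backtracking path returning to its initial vertex is the empty one. This contradiction forces $K$ to be infinitely generated, as required.

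The step I expect to be the real technical heart is the structural claim used at the start of the second paragraph — that finite generation of $K$ makes the core $\Delta$ finite and presents $\Gamma$ as $\Delta$ with trees attached at single vertices — together with the local-finiteness bookkeeping that lets one locate the ``deep'' vertex $v$; these are standard facts about subgroup graphs in free groups, but they genuinely use that $F$ has \emph{finite} rank. (An alternative, in the original spirit of Magnus–Karrass–Solitar, is to argue combinatorially with a Schreier transversal for $K$ in $F$ and Nielsen reduction of a generating set of $K$; the covering-space argument above is essentially a streamlined version of that.)
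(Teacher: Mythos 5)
This statement is not proved in the paper at all: it is quoted verbatim as a classical result, with the citation to Magnus--Karrass--Solitar (Theorem 2.10 there) serving as the proof, so there is no argument of the paper's to compare yours against line by line. Your covering-space (Stallings graph) proof is correct. The key chain — identify the covering of the finite rose with the Schreier coset graph on $K\backslash F$, note that a reduced word $w$ read from the vertex $Kg$ gives a non-backtracking path which is closed iff $gwg^{-1}\in K$, use normality of $L\le K$ to get such a closed loop labelled $w$ at \emph{every} vertex, and contradict this with the existence (when $K$ is finitely generated) of vertices whose $|w|$-ball is a tree — is sound, and the hypotheses are used exactly where they should be: finite rank gives local finiteness of the covering, infinite index gives infinitely many vertices, and finite generation of $K$ gives the finite core. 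The one ingredient you invoke without proof, that finite generation of $K$ forces the core to be finite with only hanging trees outside it, is indeed the standard Stallings fact, and you correctly flag it as the technical heart; a referee might ask you to include its short proof (e.g.\ finitely many edges outside a spanning tree, plus the observation that a tree edge with one side missing all non-tree-edge endpoints lies on no reduced circuit). Two small remarks: you tacitly use that there are only \emph{finitely many} hanging trees to find an infinite one — this follows from local finiteness, but you could bypass it entirely, since local finiteness already gives vertices at distance $>|w|$ from the finite core, and any reduced circuit must lie in the core, which yields the contradiction without discussing the tree structure at all; and your parenthetical is accurate — the original Magnus--Karrass--Solitar argument is the combinatorial one via a Schreier transversal and Nielsen-reduced generating sets, of which your argument is the geometric counterpart, trading Nielsen reduction for the finiteness of the core.
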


\begin{lem}
\label{lem_nolabel}
The subgroup $HN$ has infinite index in the free group $G$, and therefore $HN$ is not finitely generated.
\end{lem}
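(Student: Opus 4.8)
The plan is to prove the two assertions in turn. First I would show that $HN = \lb a\rb N$ has infinite index in $G = F(a,b)$, and then the failure of finite generation follows immediately from Theorem~\ref{thm_MKS}, since $N$ is the (non-trivial) commutator subgroup of $G$, hence a normal subgroup of $G$ contained in $HN$, and $G$ is free non-abelian of rank $2$.

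For the index computation, the cleanest route is to pass to the abelianization. Since $N = [G,G]$, the quotient $G/N$ is the free abelian group $\mathbb{Z}^2$ on the images $\bar a, \bar b$, and under the quotient map $HN/N$ is carried to the cyclic subgroup $\lb \bar a\rb \cong \mathbb{Z}$ of $\mathbb{Z}^2$. Therefore
\[
[G : HN] = [G/N : HN/N] = [\mathbb{Z}^2 : \lb \bar a\rb] = \infty,
\]
because $\lb \bar a\rb$ is contained in the ``$a$-axis'' and misses every coset $b^k + \lb\bar a\rb$ with $k \neq 0$. (Equivalently, the cosets $HNb^k$, $k \in \mathbb{Z}$, are pairwise distinct, since $HNb^k = HNb^{\ell}$ would force $b^{k-\ell} \in HN$, hence $\bar b^{\,k-\ell} \in \lb\bar a\rb$ in $\mathbb{Z}^2$, forcing $k = \ell$.)

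Having established infinite index, I would then invoke Theorem~\ref{thm_MKS} with $F = G$, $K = HN$, and $L = N$: the group $G$ is free non-abelian of finite rank, $K = HN$ has infinite index in $G$, and $K$ contains the normal subgroup $L = N = [G,G]$ of $G$, which is non-trivial since $G$ is non-abelian. Hence $HN$ is not finitely generated, which is the second assertion.

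I do not expect any genuine obstacle here: both steps are short and standard. The only point requiring a moment's care is to make sure the subgroup $N$ as written — generated by all conjugates $[b^\delta, a^\epsilon]^w$ — really is all of $[G,G]$ and in particular is normal in $G$; this is exactly the remark made just before Theorem~\ref{thm_MKS} in the text ($N$ is the commutator subgroup), so it can be cited rather than reproved. With that in hand, combining Corollary~\ref{cor_HN} (which shows FDT of $M$ would force $HN$ to be finitely generated) with this lemma completes the proof of Theorem~\ref{thm_themainresult}(ii).
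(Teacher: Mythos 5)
Your proposal is correct and follows essentially the same route as the paper: the paper also shows $b^k \notin HN$ for $k \neq 0$ via the $b$-exponent-sum (which is just your abelianization argument phrased directly), concludes that the cosets $HN b^k$ are pairwise distinct, and then invokes Theorem~\ref{thm_MKS} with $L = N$ the commutator subgroup. No gaps; the two arguments differ only in presentation.
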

\begin{proof}
First observe that $b^k \not\in HN$ for all $k \neq 0$. Indeed, for any word $w \in A^*$, if $w$ represents an element of $HN$ then the sum of the exponents of the $b$'s of $w$ must equal zero. It follows that for all $k,l \in \mathbb{N}$ if $k \neq l$ then $b^k$ and $b^l$ belong to distinct cosets of $HN$. Therefore $HN$ has infinite index in $G$. The last statement is then a consequence of Theorem~\ref{thm_MKS}.  
\end{proof}

Since $HN$ is not finitely generated it follows by Corollary~\ref{cor_HN} that $M$ does not have finite derivation type. This completes the proof of Theorem~\ref{thm_themainresult}.

\section{Applications}
\label{sec_consequences}

In this section we give two further applications of Theorem~\ref{thm_themainresult}.

\subsection*{Quasi-isometry invariance}
\begin{sloppypar}
A key concept in geometric group theory
is that of quasi-isometry: a notion of equivalence between metric spaces
which captures formally the intuitive idea of two spaces looking the same
"when viewed from far away"; see \cite{delaHarpeBook}.
The Cayley graph of a finitely generated group may
naturally be viewed as a metric space with respect to the word metric. Many important properties of finitely generated groups are then known to be shared between groups that are quasi-isometric to each other, meaning that they have Cayley graphs that are quasi-isometric as metric spaces; see \cite[p115, Section 50]{delaHarpeBook} for a list of such properties. In particular, the homological finiteness property $\mathrm{FP}_n$ is know to be a quasi-isometry invariant of finitely generated groups; see \cite{Alonso1994}. Combining this with \cite{Cremanns1996} it follows that $\mathrm{FDT}$ is a quasi-isometry invariant of finitely generated groups.
\end{sloppypar}

In a monoid Cayley graph, by contrast,
distance is neither symmetric (since there are no inverses) nor everywhere
defined (since there may be ideals). Hence, there is no hope that a general monoid will `resemble' a metric space.
Thus, rather than a metric space, a more natural geometric object to associate to a finitely generated monoid is a, so-called \emph{semimetric space} which is a set equipped with an assymetric, partially defined distance function.
This is the viewpoint taken in \cite{Gray2009} where, among other things, a natural notion of quasi-isometry for such spaces is exhibited, and several quasi-isometry invariants of monoids are identified.
The axioms for a semimetric space are given by taking the usual metric space axioms, relaxing the condition that distances are always defined (i.e. allowing points to be at distance $\infty$), and dropping the symmetry assumption so that $d(x,y)$ and $d(y,x)$ need not be equal; see \cite{Gray2009} for a formal definition. Then given a monoid $S$ and a finite generating set $A$ for $S$, we associate a semimetric space $(S,d_A)$ where $d_A$ is the obvious directed distance semimetric given by
\[
d_A(x,y) = \inf \{ |w| : w \in A^* : xw = y \}.
\]
We shall now see that, in contrast to the situation for groups, the property $\mathrm{FDT}$ is \emph{not} a quasi-isometry invariant of monoids. In fact, we do more than this. We shall actually show that $\mathrm{FDT}$ is not even an isometry invariant of finitely generated monoids. Just as for metric spaces, by an isometry of semimetric spaces we mean a distance-preserving map between semimetric spaces; see \cite[Definition~2]{Gray2009}.
\begin{thm}
\label{thm_qsi}
Let $M$ be the monoid defined by the following presentation
\begin{align*}
\lb \; a, a^{-1}, b, b^{-1}, h, z
& \; | \; aa^{-1} = a^{-1}a=bb^{-1}=b^{-1}b=1, \\
& \phantom{\; | \;}
xh = hx, \; hxy = hyx \quad (x,y \in \{ a, a^{-1}, b, b^{-1} \}),  \\
& \phantom{\; | \;}
h^2 a = h^2 a^{-1} = h^2 b = h^2 b^{-1} = h^3 = h^2, \\
& \phantom{\; | \;}
h^2=z \; \rb,
\end{align*}
and let $N$ be the monoid defined by
\begin{align*}
\lb \; a, a^{-1}, b, b^{-1}, h, z
& \; | \; aa^{-1} = a^{-1}a=bb^{-1}=b^{-1}b=1, \\
& \phantom{\; | \;}
xh = hx, \; hxy = hyx \quad (x,y \in \{ a, a^{-1}, b, b^{-1} \}),  \\
& \phantom{\; | \;}
zu = uz = z \quad (u \in \{ a, a^{-1}, b, b^{-1}, h, z \}) \\
& \phantom{\; | \;}
h^2=h \; \rb.
\end{align*}
Then, with $A = \{ a, a^{-1}, b, b^{-1}, h, z \}$, $(M,d_A)$ and $(N,d_A)$ are isometric.
However, $M$ does not have $\mathrm{FDT}$ and is not presentable by a finite complete rewriting system, while $N$ is presentable by a finite complete rewriting system, and $N$ does have $\mathrm{FDT}$.
\end{thm}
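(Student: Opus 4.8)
The statement has three ingredients --- the two negative assertions about $M$, the two positive assertions about $N$, and the isometry --- and I would prove them in that order. \emph{The monoid $M$.} The presentation displayed for $M$ is obtained from the presentation of the monoid $M$ of Theorem~\ref{thm_themainresult} by adjoining one new generator $z$ together with the single relation $h^2=z$. This is a Tietze transformation introducing a redundant generator, so the monoid defined here is isomorphic to the monoid $M$ of Theorem~\ref{thm_themainresult}, with $z$ merely a new name for the zero $h^2$. Since finite derivation type and presentability by a finite complete rewriting system are invariants of a finitely presented monoid (by \cite{Squier1}), Theorem~\ref{thm_themainresult}(ii) immediately gives that $M$ has neither property.

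\emph{The monoid $N$.} The plan is to pin down the structure of $N$ and then invoke the known results for regular monoids with finitely many left and right ideals. I would first exhibit an explicit model: let $T$ be the monoid carried by the set $F(a,b) \sqcup \mathbb{Z}^2 \sqcup \{\zeta\}$, in which $F(a,b)$ is the group of units (the free group of rank two), $\zeta$ is a two-sided zero, $\mathbb{Z}^2$ is a subsemigroup which is a group with identity $(0,0)$, and the mixed products are governed by the abelianisation homomorphism $F(a,b)\to\mathbb{Z}^2$ sending a word to its pair of exponent sums of $b$ and of $a$ --- this is precisely the intended behaviour of the $\gh$-class ``$h\cdot(\text{free abelian part})$''. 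A routine check shows $T$ is associative and that the assignment sending $a^{\pm1},b^{\pm1}$ to the corresponding elements of $F(a,b)$, $h$ to $(0,0)$, and $z$ to $\zeta$ respects every defining relation of $N$, giving a surjection $N\twoheadrightarrow T$. Conversely, a short normal-form computation shows that every element of $N$ is represented by a word in $F(a,b)\cup\{hb^ja^k:j,k\in\mathbb{Z}\}\cup\{z\}$: any word containing $z$ collapses to $z$ via $zu=uz=z$; any word containing $h$ but not $z$ is brought, using $xh=hx$ and $h^2=h$, to the form $hv$ with $v$ over $\{a^{\pm1},b^{\pm1}\}$ and then to some $hb^ja^k$ using $hxy=hyx$ and free cancellation; and a word with neither $h$ nor $z$ reduces freely inside $F(a,b)$. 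Since $N\twoheadrightarrow T$ is visibly injective on this set, $N\cong T$, and the displayed set is a set of normal forms. Thus $N$ is a regular monoid with finitely many left and right ideals, its three $\gh$-classes being the group of units $F(a,b)$, a group $\gh$-class isomorphic to $\mathbb{Z}^2$ with idempotent $h$, and the singleton $\{z\}$; so its maximal subgroups are the free group of rank two, the free abelian group of rank two, and the trivial group, each of which is presentable by a finite complete rewriting system and has finite derivation type. Hence, by \cite[Theorem~1]{GrayMalheiro2} and \cite[Theorem~2]{GrayMalheiro} respectively, $N$ is presentable by a finite complete rewriting system and has finite derivation type.

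\emph{The isometry.} The normal-form sets just obtained for $M$ and for $N$ are literally the same set of words, so there is a bijection $\theta\colon M\to N$ identifying elements with equal normal forms (the zero with the zero, each reduced word of $F(a,b)$ with itself, each $hb^ja^k$ with $hb^ja^k$). I claim $\theta$ preserves $d_A$. The key point is that the right Cayley graphs of $M$ and of $N$ over $A$ become identical once the zero is deleted: on the group of units both carry the free-group edges ($\cdot a^{\pm1}$, $\cdot b^{\pm1}$) together with an edge $u\xrightarrow{\,h\,}hb^m a^n$ into the $\gh$-class of $h$, where $(m,n)$ is the abelianisation of $u$; on that $\gh$-class both carry the free-abelian edges; and the only discrepancy is the $h$-labelled edge out of a vertex $v$ of that $\gh$-class, which leads to the zero in $M$ but is a loop at $v$ in $N$ --- a loop, which affects no distance. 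Because the zero is absorbing, a directed path never leaves it once reached, so a shortest word realising $d_A(x,y)$ for non-zero $x,y$ never visits the zero, and such distances are computed entirely inside the common zero-deleted graph and therefore agree. Finally $d_A(x,z)=1=d_A(\theta x,z)$ for every non-zero $x$ (apply the generator $z$), and $d_A(z,y)=d_A(\theta z,\theta y)$ trivially. Hence $\theta$ is a distance-preserving bijection, so $(M,d_A)$ and $(N,d_A)$ are isometric.

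\emph{Main obstacle.} The only step that is not essentially immediate is determining the structure of $N$, that is, verifying the model $T$ and carrying out the normal-form reduction; the assertions about $M$ fall straight out of Theorem~\ref{thm_themainresult}, and the isometry reduces to the elementary comparison of Cayley graphs above. The one mild point to watch in that comparison is that no non-zero element can reach the zero in fewer than one step, which is clear since $z$ is itself a generator sending every element to the zero.
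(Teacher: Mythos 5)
Your proposal is correct and follows essentially the same route as the paper: $M$ is identified with the monoid of Theorem~\ref{thm_themainresult} via the redundant generator $z$, the positive assertions for $N$ come from its structure as a regular monoid with finitely many left and right ideals whose maximal subgroups are the trivial group, $\mathbb{Z}^2$ and $F(a,b)$ together with the results of \cite{GrayMalheiro2,GrayMalheiro}, and the isometry is obtained from the common set of normal forms by comparing the two Cayley graphs, whose only differences involve $h$-labelled edges into the zero versus loops. Your extra details (the explicit model $T$ for $N$ and the argument that shortest paths between non-zero elements avoid the absorbing zero) simply flesh out steps the paper treats as routine.
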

\begin{proof}
Observe that $M$ is isomorphic to the monoid defined in Theorem~\ref{thm_themainresult}, the presentation being obtained from the presentation in Theorem~\ref{thm_themainresult} by adding a redundant generator $z$ and the relation $h^2 = z$.

We must define a distance preserving bijection $f:M \rightarrow N$. A set of normal forms for $M$ is easily seen to be given by $\mathcal{M} = \mathcal{M}_1 \cup \mathcal{M}_h \cup \mathcal{M}_0$ where $\mathcal{M}_1 = F(a,b)$ denotes the set of reduced words in the free group over $\{a,b\}$, $\mathcal{M}_h = \{ hb^ja^k : j,k \in \mathbb{Z} \}$ and $\mathcal{M}_0 = \{ z \}$. Also, a set of normal forms for $N$ is easily seen to be given by $\mathcal{N} = \mathcal{N}_1 \cup \mathcal{N}_h \cup \mathcal{N}_0$ where $\mathcal{N}_1 = \mathcal{M}_1$, $\mathcal{N}_h= \mathcal{M}_h$ and $\mathcal{N}_0 = \mathcal{M}_0$. Since the sets of normal forms $\mathcal{M}$ and $\mathcal{N}$ are identical, identifying $M$ and $N$ with their sets of normal forms, we can set $f: \mathcal{M} \rightarrow \mathcal{N}$ to be the identity mapping, which of course is a bijection.
It is then a routine matter to show that $f$ is distance preserving.
Indeed, in the (right) Cayley graph of $M$, for every element in $\mathcal{M}_h$ the arc from this vertex labelled by $h$ goes to $z$. By removing all of these arcs and adding in a loop labelled by $h$ for each vertex in $\mathcal{M}_h$ we would obtain precisely the Cayley graph of $N$. 
 So in the Cayley graph of $M$ for every element in $\mathcal{M}_h$ there are two directed arcs from the element to $z$, one labelled $h$ and the other labelled $z$, while in the Cayley graph of $N$ there is only one such arc, labelled by $z$. Also in $\mathcal{N}_h$ every element has a loop labelled by $h$ (since $h$ is the identity of the group $\gh$-class) while in $\mathcal{M}_h$ no such loops exist. Since these are the only differences between the two Cayley graphs, it follows immediately that $f$ is an isometry of semimetric spaces.

The facts that $M$ does not have $\mathrm{FDT}$ and is not presentable by a finite complete rewriting system
follow from Theorem~\ref{thm_themainresult}.

On the other hand, $N$ is a regular monoid with finitely many $\gh$-classes, and all maximal subgroups of $N$ are definable by finite complete rewriting systems. Indeed, the maximal subgroups in question are the trivial group, the free abelian group of rank two, and the free group of rank two. It therefore follows from the main result of \cite{GrayMalheiro2} (see also \cite{GrayMalheiro}) that $N$ is presentable by a finite complete rewriting system, and therefore $N$ also has $\mathrm{FDT}$.  \end{proof}

\subsection*{Green index extensions} We say that $T$ is a \emph{large} subsemigroup of a semigroup $S$, and $S$ is a \emph{small extension} of $T$, if $T \leq S$ and $|S \setminus T| < \infty$. This notion was first investigated by Jura in \cite{Jura1,Jura2}. In \cite{Ruskuc1} it was shown that the property of being finitely presented is inherited by small extensions, and then in \cite{Wang1998} Wang showed that both the property of being definable by a finite complete rewriting system, and also the property FDT, are also inherited when taking small extensions. Analogously, in group theory, both of these finiteness properties are known to be preserved when taking finite index extensions (in the usual group-theoretic sense); see \cite[Proposition 5.1]{Brown} and \cite{Groves1993}. While analogous, these results are independent in the sense that the results for groups cannot (of course) be used to deduce the results regarding small extensions of semigroups, and conversely the small extensions results cannot be used to deduce the result for finite index extensions of groups. 

It is natural to ask whether the small extensions results can be generalised (by weakening the condition that the complement $S \setminus T$ is finite) in such a way as to obtain a single result that has both the semigroup and group-theoretic results as corollaries. This kind of question was one of the motivations for the work in \cite{Gray2008} where a less restrictive notion of index for semigroups was introduced, called Green index, which we now briefly describe.

Let $S$ be a semigroup and let $T$ be a subsemigroup of $S$.
We use $S^1$ to denote the semigroup $S$ with an identity element $1 \not\in S$ adjoined to it. This notation will be extended to subsets of $S$, i.e. $X^1 = X \cup \{ 1 \}$. For $u,v \in S$ define
\[
u \gr^T v  \ \Leftrightarrow  \ uT^1 = vT^1, \quad u \gl^T v \  \Leftrightarrow \  T^1u = T^1v,
\]
and $\gh^T = \gr^T \cap \gl^T$. Each of these relations is an equivalence relation on $S$; their equivalence classes are called the
($T$-)\emph{relative}  $\gr$-, $\gl$-, and $\gh$-classes, respectively.
Furthermore, these relations respect $T$, in the sense that each
$\gr^T$-, $\gl^T$-, and $\gh^T$-class lies either wholly
in $T$ or wholly in $S \setminus T$. Relative Green's relations were introduced by Wallace in \cite{Wallace} generalising the fundamental work of Green \cite{Green1}. Following \cite{Gray2008} we define the \emph{Green index} of $T$ in $S$ to be one more than the number of $\gh^T$-classes in $S \setminus T$. Clearly if $T$ is a large subsemigroup of a semigroup $S$ then $T$ has finite Green index in $S$, and also if $H$ is a finite index (in the usual group-theoretic sense) subgroup of a group $G$ then $H$ has finite Green index in $G$. 

\begin{sloppypar}
With each $T$-relative $\gh$-class we may associate a group, which we call the $T$-relative \emph{Sch\"{u}tzenberger group} of the $\gh$-class.
This is done by extending, in the obvious way, the classical definition (defined above) to the relative case.
For each $T$-relative $\gh$-class $H$ let
$\mathrm{Stab}(H) = \{ t \in T^1 : Ht = H \}$
(the \emph{stabilizer} of $H$ in $T$), and
define an equivalence $\gamma=\gamma(H)$ on $\mathrm{Stab}(H)$ by $(x,y) \in \gamma$ if and only if $hx = hy$
for all $h\in H$. Then $\gamma$ is a congruence on $\mathrm{Stab}(H)$ and $\mathrm{Stab}(H) / \gamma$ is a group.
The group $\Gamma(H)=\mathrm{Stab}(H) / \gamma$ is called the \emph{relative Sch\"{u}tzenberger group} of $H$.
\end{sloppypar}

In \cite{Cain2009}, providing a common generalisation of \cite[Theorem~4.1]{Ruskuc1} and the corresponding classical result for finite index extensions in group theory, it was proved that if $T$ is a finite Green index subsemigroup of a semigroup $S$, then if $T$ is finitely presented and all of the $T$-relative Sch\"{u}tzenberger groups of $S \setminus T$ are finitely presented, then $S$ itself is finitely presented.

Now, as pointed out above, it is known that in group theory both finite derivation type, and the property of being presentable by a finite complete rewriting system, are preserved by finite index group extensions. In \cite{Wang1998} the analogous results were proved for passing to small extensions of semigroups. Thus a natural question is whether these results have a common generalisation to finite Green index extensions, as was known to be the case for finite presentability in \cite{Cain2009}. Using the example from Section~\ref{sec_up} we now answer this question in the negative.

\begin{thm}
There exists a semigroup $S$ with a subsemigroup $T$ of finite Green index, such that:
\begin{enumerate}[{\normalfont (i)}]
\item
$T$, and all of the relative Sch\"{u}tzenberger groups of $S \setminus T$ admit presentations by finite complete rewriting system, and thus all have finite derivation type;
\item
$S$ does not have finite derivation type, and so does not admit a presentation by a finite complete rewriting system.
\end{enumerate}
\end{thm}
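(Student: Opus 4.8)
The plan is to take $S$ to be the monoid $M$ of Theorem~\ref{thm_themainresult} itself, and to take $T$ to be its group of units $H_1 = F(a,b)$, regarded as a subsemigroup of $M$. The crucial feature of this example is that $T$ has \emph{infinite} complement in $S$ (so it is not a large subsemigroup, and the results of \cite{Wang1998} do not apply), and yet $T$ has finite Green index in $S$ and all of the group-theoretic data attached to it is tame.

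I would first show that $T$ has finite Green index in $S$ by computing the relative Green's relations from the normal forms of Proposition~\ref{prop_normalforms}. Since in $M$ the letter $h$ commutes with every generator and satisfies $hxy = hyx$, multiplying an element $hb^ja^k \in H_h$ on the left or on the right by a word $w \in F(a,b)$ merely shifts $j$ and $k$ by the $a$- and $b$-exponent sums of $w$; hence $T^1(hb^ja^k) = H_h = (hb^ja^k)T^1$ for all $j,k \in \mathbb{Z}$, so $H_h$ is a single $\gh^T$-class and $\mathrm{Stab}(H_h) = T^1$. Similarly $h^2 t = h^2$ for every $t \in T^1$, so $H_0 = \{h^2\}$ is a singleton $\gh^T$-class with $\mathrm{Stab}(H_0) = T^1$. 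Thus $S \setminus T = H_h \cup H_0$ consists of exactly two $\gh^T$-classes, and therefore $T$ has Green index $3$ in $S$.

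Next I would identify the relative Sch\"{u}tzenberger groups of these two $\gh^T$-classes. For $H_0$ the congruence $\gamma(H_0)$ is all of $\mathrm{Stab}(H_0) \times \mathrm{Stab}(H_0)$ (since $h^2 x = h^2 = h^2 y$ for all $x,y$), so the relative Sch\"{u}tzenberger group $\Gamma(H_0)$ is trivial. For $H_h$, exactly as in the computation of $\gG(H_h)$ in the proof of Proposition~\ref{prop_normalforms}(ii), one checks (it suffices to test against the element $h \in H_h$) that $(x,y) \in \gamma(H_h)$ if and only if $hx = hy$ in $M$, i.e. if and only if $x$ and $y$ represent the same element of the free abelian group of rank two; hence $\Gamma(H_h)$ is free abelian of rank $2$. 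Together with $T = F(a,b)$, all three groups that occur --- the trivial group, $\mathbb{Z} \times \mathbb{Z}$, and the free group of rank $2$ --- are well known to be presentable by finite complete rewriting systems (the empty presentation, a standard confluent system for the free abelian group, and the free-reduction system, respectively), and hence have $\mathrm{FDT}$ by Squier's theorem (Lemma~\ref{lem_criticalcircuits}). This establishes part (i) of the theorem; part (ii) is then immediate from Theorem~\ref{thm_themainresult}(ii), since $S = M$.

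The only point requiring any care is the one just used: recognising that the group of units of $M$, despite having infinite complement, has finite Green index and relative Sch\"{u}tzenberger groups that are all simple, well-behaved groups. Beyond that there is no real obstacle --- the theorem follows directly from the failure of $\mathrm{FDT}$ for $M$ proved in Theorem~\ref{thm_themainresult}.
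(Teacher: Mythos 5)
Your proposal is correct and follows essentially the same route as the paper: take $S=M$ and $T=H_1$, the group of units, deduce finite Green index and the relative Sch\"{u}tzenberger groups (trivial and free abelian of rank $2$) from the normal-form analysis of Proposition~\ref{prop_normalforms}, and invoke Theorem~\ref{thm_themainresult}(ii) for part (ii). The only difference is that you spell out the relative Green's relation and congruence computations that the paper leaves as ``the analysis in the proof of Proposition~\ref{prop_normalforms}'', and these details are accurate.
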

\begin{proof}
Let $S=M$ the monoid defined in Theorem~\ref{thm_themainresult} and let $T$ be the group of units $H_1$ of $S$. Then it follows from the analysis in the proof of Proposition~\ref{prop_normalforms} that $H_1$ has finite Green index in $S$, that the relative Sch\"{u}tzenberger groups of $S \setminus T$ are isomorphic to the trivial group and the free abelian group of rank two, respectively, and hence they admit presentations by finite complete rewriting systems, and thus all have finite derivation type. Also, $T=H_1$ which is isomorphic to the free group of rank two, and so admits a presentation by a finite complete rewriting system, and has FDT. This proves (i). Part (ii) follows from Theorem~\ref{thm_themainresult}.
\end{proof}

\bibliographystyle{abbrv}
\def\cprime{$'$} \def\cprime{$'$} \def\cprime{$'$}

\end{document}